\documentclass[12pt]{article}
\usepackage{graphicx, amsmath, amssymb, amsthm,anysize,float,framed,paralist,booktabs}
\usepackage{algorithm,algorithmic,multicol}
\usepackage{hyperref}
\usepackage{booktabs,todonotes}
\hypersetup{citecolor=red, linkcolor=blue, colorlinks=true}

\marginsize{2cm}{2cm}{1cm}{3cm}

\newcommand{\gauss}[2]{\genfrac{[}{]}{0pt}{}{#1}{#2}}
\newcommand{\gaussm}[1]{[#1]}

\newcommand{\scrC}{\mathcal{C}}

\newcommand{\scrH}{\mathcal{H}}

\newcommand{\bbP}{\mathbb{P}}

\newcommand{\bbF}{\mathbb{F}}

\newcommand{\Sp}{Sp}
\newcommand{\id}{\text{id}}

 \numberwithin{equation}{section}
 \newtheorem{theorem}[equation]{Theorem}
 \newtheorem{cor}[equation]{Corollary}
 \newtheorem{construction}[equation]{Construction}
 \newtheorem{lemma}[equation]{Lemma}
\newtheorem{question}{Question}

 \theoremstyle{definition}
 
 \theoremstyle{rem}
 \newtheorem{rem}[equation]{Remark}

\title
 {A Switching for all Strongly Regular Collinearity Graphs From Polar Spaces}
\author{Ferdinand Ihringer}

\begin{document}

\maketitle

\begin{abstract}
  We describe a general construction of strongly regular graphs from the collinearity graph of a finite classical polar spaces of rank at least $3$
  over a finite field of order $q$. 
  We show that these graphs are non-isomorphic to the collinearity graphs and have the same parameters.
  To our knowledge for most of these parameters these graphs are new as the collinearity graphs were the only known examples.
\end{abstract}

Keywords: strongly regular graph, polar space, switching.

MSC codes: 51E20, 05B25, 05C62, 05E30.

\section{Introduction}

Recently, many researchers constructed strongly regular graphs with the same parameters 
as the collinearity graphs of finite classical polar spaces. This was triggered by a result
of Abiad and Haemers, who used Godsil-McKay switching \cite{Godsil1982} to obtain strongly regular graphs with the same
parameters as $\Sp(2d, 2)$ for all $d \geq 3$ \cite{Abiad2016}. More new graphs with the same parameters
as $\Sp(2d, 2)$ or related graphs were found by various other researchers \cite{Hui2016,Kubota2016}. Barwick, Jackson and Penttila
generalized some of these constructions to the graphs $O^+(2d, 2)$, $O(2d+1, 2)$ and $O^-(2d+2, 2)$ \cite{Barwick2016}.
There is also a construction by Kantor for collinearity graphs from finite classical polar spaces where, 
when the construction is applicable, the non-isomorphism to existing graphs does not seem to be known in general \cite{Kantor1982a}.
We construct new strongly regular graphs with the same parameters as all collinearity graphs that come from finite classical polar
spaces of rank at least $3$; that is $\Sp(2d, q)$, $O^+(2d, q)$, $O(2d+1, q)$, $O^-(2d+2, q)$, $U(2d, q)$ and $U(2d+1, q)$ (Construction \ref{constr:main}).
Unlike all the previous constructions, which use a variant of switching, $q$ can be any prime power.
Also we do not rely on the existence of spreads, which often do not exist \cite[Table 3]{DeBeule2012}, but which are key for Kantor's construction.
This implies that most of the constructed graphs are new. Our construction makes it apparent that there 
are most likely many different strongly regular graphs with these parameters.

For our result we modify Godsil-McKay switching for these particular graphs. The modification is heavily inspired by a work by Jungnickel and Tonchev
who did something similar for designs coming from projective spaces \cite{Jungnickel2010}.

This work is structured as follows. In Section \ref{sec:prelim} we introduce our notation and basic properties of
finite classical polar spaces. In Section \ref{sec:switching} we provide our construction for new strongly regular graphs.
In Section \ref{sec:noniso}, we show that our construction yields non-isomorphic graphs.
We conclude with some questions for further work in Section \ref{sec:openproblems}.

\section{Preliminaries}\label{sec:prelim}

We repeat several important facts about projective, affine and polar spaces and fix the used notation.
We refer to \cite{Hirschfeld1979,Hirschfeld1991} as a standard reference on the topic.

\subsection{Projective and Affine Spaces}

We only use classical projective spaces over finite fields. Let $\bbF_q$ denote
the finite field of order $q$. We identify the projective space of (projective) dimension $d-1$
over $\bbF_q$ with $\bbF_q^d$:
the $1$-dimensional subspaces of $\bbF_q^d$ are the \textit{points} and the $2$-dimensional
subspaces of $\bbF_q^d$ are the \textit{lines}. We denote the number of $t$-dimensional
subspaces of $\bbF_q^d$ by $\gauss{d}{t}_q$. When $q$ is clear from the context, then we 
write $\gauss{d}{t}$ instead of $\gauss{d}{t}_q$. A standard argument \cite[Section 3.1]{Hirschfeld1979} shows
\begin{align*}
  \gauss{d}{t} = \prod_{i=1}^t \frac{q^{d-i+1}-1}{q^i-1}.
\end{align*}
Notice that
\begin{align*}
  \gauss{d}{t} = \gauss{d}{d-t}.
\end{align*}
We denote $\gauss{d}{1}$, the number of points and the number of hyperplanes, by $\gaussm{d}$.
Also recall that $\gaussm{d-s}$ is the number of hyperplanes through an $s$-dimensional subspace of $\bbF_q^d$.

One can obtain an affine space $\tilde{P}$ of dimension $d-1$ from a projective space $P$ of dimension $d-1$
by removing one hyperplane $L$ of $\bbF_q^d$ (\textit{hyperplane at infinity}) \cite[Section 2.2]{Hirschfeld1979}. 
Then the $q^{d-1}$ points ($1$-dimensional subspaces)
of $P \setminus L$ correspond to the points of the affine space $\tilde{P}$.
Let $H$ and $H'$ be two different hyperplanes of $P$. Let $L'$ denote the intersection of $H$ and $H'$.
Then $L'$ is a $(d-2)$-dimensional subspace of $P$. If $L' \subseteq L$, then $H$ and $H'$ are disjoint
affine hyperplanes of $\tilde{P}$ and they lie in the same \textit{parallel class (of hyperplanes)}.
Otherwise they intersect in an affine subspace of dimension $d-2$ with $q^{d-2}$ points.
Notice that we have as many parallel classes of hyperplanes as we have $(d-2)$-dimensional
subspaces of $L$. That are $\gaussm{d-1}$. Each parallel class contains $q$ hyperplanes
and these partition the points $\tilde{P}$.

\subsection{Polar Spaces}

We start with the definition of the finite classical polar spaces. We refer to \cite{Hirschfeld1991}, \cite[Chapter 8]{Tits1974} and \cite[Section 9.5]{Brouwer1989} for
details. Notice that there are six types of finite classical polar spaces and we define them as the incidence structure of 
subspaces of a finite vector space $V$ which vanish on a given sesquilinear or quadratic form. We call the vanishing subspaces \textit{isotropic}.
For a given polar space $\bbP$, we sometimes say subspace of $\bbP$ instead of isotropic subspace.

In the following we give standard forms for the finite classical polar spaces and specify $V$ for each case.
We associate a parameter $e \in \{ 0, 1/2, 1, 3/2, 2\}$ with each polar space. The parameter $d$ denotes the 
maximal dimension of an isotropic subspace of the polar space. Note that all maximal isotropic subspaces
have dimension $d$. We call subspaces of dimension $d$ \textit{generators}.

\begin{enumerate}[i.]
 \item $O^+(2d, q)$: the vector space is $\bbF_q^{2d}$. The quadratic form is $x_1^2 + x_2x_3 + \ldots + x_{2d-1} x_{2d}$. Here $e=0$.
 \item $O(2d+1, q)$: the vector space is $\bbF_q^{2d+1}$. The quadratic form is $x_1x_2 + x_3x_4 + \ldots + x_{2d} x_{2d+1}$. Here $e=1$.
 \item $O^-(2d+2, q)$: the vector space is $\bbF_q^{2d+2}$. The quadratic form is $f(x_1, x_2) + x_3x_4 + \ldots + x_{2d+1} x_{2d+2}$. Here $e=2$ and 
      $f$ is a homogenous quadratic polynomial that is irreducible over $\bbF_q$.
 \item $\Sp(2d, q)$: the vector space is $\bbF_q^{2d}$. The symplectic form is $x_1y_2 - x_2y_1 + \ldots + x_{2d-1}y_{2d} - x_{2d}y_{2d-1}$. Here $e=1$.
 \item $U(2d, q)$: here $q$ is a square and the vector space is $\bbF_q^{2d}$. The associated Hermitian form is $x_1y_1^{\sqrt{q}}+\ldots+x_{2d}y_{2d}^{\sqrt{q}}$.
    Here $e=1/2$.
 \item $U(2d+1, q)$: here $q$ is a square and the vector space is $\bbF_q^{2d+1}$. The associated Hermitian form is $x_1y_1^{\sqrt{q}}+\ldots+x_{2d+1}y_{2d+1}^{\sqrt{q}}$.
    Here $e=3/2$.
\end{enumerate}

For a subspace $S$ of a polar space $\bbP$, $S^\perp$ denotes the incidence structure of all the subspaces $T$ of $\bbP$ such that
$T$ is perpendicular to $S$, i.e. $\langle S, T \rangle$ is a subspace of $\bbP$. 
The \textit{collinearity graph} $\Gamma_0$ of a polar space has the $1$-dimensional subspaces of $\bbP$, \textit{points}, as vertices
and two different points $x$ and $y$ are adjacent if $x \in y^\perp$. For a given polar spaces, it is well-known that $\Gamma_0$ is strongly regular with parameters\footnote{See Lemma 2.3.1 in the Master's thesis of the author: \url{http://math.ihringer.org/mscthesis/thesis.pdf}}
 $(v_0, k_0, \lambda_0, \mu_0)$ where
\begin{align*}
  &v_0 = (q^{d-1+e}+1) \gaussm{d},\\
  &k_0 = q (q^{d-2+e}+1) \gaussm{d-1},\\
  &\lambda_0 = q-1 + q^2 (q^{d-3+e}+1) \gaussm{d-2},\\
  &\mu_0 = (q^{d-2+e}+1) \gaussm{d-1}.
\end{align*}

In the following we always fix one of the six given types of finite classical polar spaces.
Furthermore, we fix $q$ and $e$. Hence, we write $\bbP_d$
for a polar space of rank $d$ without specifying $q$, $e$ or the type of $\bbP_d$.

As for projective and affine spaces, we repeat some basic facts about finite classical polar spaces.
Notice that the counting arguments for quadrics in \cite[Chapter 22]{Hirschfeld1991}, which we use here,
work the same for the other finite classical polar spaces.

\begin{lemma}[{\cite[Section 22.4, Section 23.3]{Hirschfeld1991}, \cite[Chapter 8]{Tits1974}}]\label{lem:gen_max_dimension}
  An isotropic subspace of a polar space of rank $d$ has at most dimension $d$.
\end{lemma}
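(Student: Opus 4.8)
The plan is to prove the upper bound by working with the bilinear or sesquilinear form underlying each of the six standard forms. For the symplectic and Hermitian cases let $b$ be the given form; for the three orthogonal cases let $b(x,y) = Q(x+y) - Q(x) - Q(y)$ be the polarization of the quadratic form $Q$. In every case an isotropic subspace $W$ (meaning $Q|_W \equiv 0$, respectively $b|_{W \times W} \equiv 0$) is totally isotropic for $b$, so that $W \subseteq W^\perp$. I would first record this containment and then estimate $\dim W^\perp$.

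For the non-degenerate forms ($\Sp(2d,q)$, $U(2d,q)$, $U(2d+1,q)$, and the orthogonal forms in odd characteristic) one has $\dim W^\perp = \dim V - \dim W$. Combining with $W \subseteq W^\perp$ gives $2\dim W \le \dim V$, hence $\dim W \le d$ directly for $\Sp(2d,q)$ and $O^+(2d,q)$, and $\dim W \le d + \tfrac12$, so $\dim W \le d$ (as $W$ is a subspace), for $U(2d+1,q)$ and $O(2d+1,q)$. The two cases that resist this crude count are $O^-(2d+2,q)$, where the bound only yields $\dim W \le d+1$, and the orthogonal spaces in characteristic $2$, where $b$ is alternating and hence degenerate on an odd-dimensional $V$.

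To close these gaps I would invoke the Witt decomposition $V = H_1 \perp \cdots \perp H_m \perp V_0$ into hyperbolic planes $H_i$ and an anisotropic kernel $V_0$ carrying no nonzero isotropic vector. Any totally isotropic subspace meets $V_0$ trivially, and a standard consequence of Witt's extension theorem is that its dimension is at most the Witt index $m$; this reduces the lemma to computing $m$. The anisotropic kernels of the six forms have dimension $0$ (for $O^+$, $\Sp$, $U$ in even dimension), $1$ (for $O$, $U$ in odd dimension) and $2$ (for $O^-$), which gives $m = d$ in each case and settles the $O^-$ and characteristic-$2$ cases uniformly.

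The main obstacle is the determination of these anisotropic-kernel dimensions, equivalently the statement that a non-degenerate form over $\bbF_q$ of the relevant type in sufficiently many variables must possess a nonzero isotropic vector. This rests on the classification of quadratic and Hermitian forms over finite fields together with a Chevalley--Warning counting argument, precisely the kind of counting over quadrics invoked from \cite[Chapter 22]{Hirschfeld1991}; once it is in hand, the combination of $W \subseteq W^\perp$ with the Witt index equalling $d$ completes the proof. An alternative route avoiding explicit Witt theory is an induction on $d$ via the residue $p^\perp/p$ of an isotropic point $p \in W$, which carries an induced non-degenerate form of the same type on a space of dimension $\dim V - 2$ and in which $W/p$ is totally isotropic of dimension $\dim W - 1$.
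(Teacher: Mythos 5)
The paper does not actually prove this lemma: it is stated as a known fact with citations to Hirschfeld--Thas and Tits, so there is no internal argument to compare against. Your sketch is, in effect, a reconstruction of the standard proof that lives in those references, and it is correct in outline: passing to the polarization, observing $W \subseteq W^\perp$ for a totally isotropic (totally singular) $W$, and using $\dim W^\perp = \dim V - \dim W$ disposes of every case except $O^-(2d+2,q)$ and the characteristic-$2$ orthogonal spaces, exactly as you say; the Witt decomposition with anisotropic kernels of dimension $0$, $1$, $2$ then pins the Witt index at $d$ uniformly. The one piece you correctly flag but do not carry out --- that the anisotropic kernel of a quadratic or Hermitian form over $\bbF_q$ has dimension at most $2$ (respectively $1$), i.e.\ the Chevalley--Warning-type existence of singular vectors --- is genuinely where the finiteness of the field enters, so your proof is a sketch rather than complete at that point; this is acceptable given that the lemma is itself imported from the literature. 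Your alternative route by induction on $d$ through the residue $p^\perp/p$ is arguably closer in spirit to the paper, which repeatedly exploits exactly this quotient construction (Lemma \ref{lem:basic_quotient_geom}) in its other proofs.
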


\begin{lemma}[{\cite[Section 8.5.2]{Tits1974}}]\label{lem:basic_quotient_geom}
  For an $s$-dimensional subspace $S$ of $\bbP_d$, the quotient geometry $S^\perp/S$ is isomorphic to $\bbP_{d-s}$.
\end{lemma}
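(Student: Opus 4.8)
The plan is to transport the defining form of $\bbP_d$ to the quotient $S^\perp/S$, to recognise the resulting form as one of the same type but of rank $d-s$, and then to match isotropic subspaces across the quotient map. Write $b$ for the sesquilinear or symplectic form defining $\bbP_d$, and in the orthogonal cases let $Q$ be the quadratic form with associated bilinear form $b(u,v)=Q(u+v)-Q(u)-Q(v)$; let $S^\perp=\{v\in V: b(v,w)=0 \text{ for all } w\in S\}$ denote the vector-space perpendicular. Because $S$ is isotropic we have $S\subseteq S^\perp$, and reflexivity of $b$ makes every $w\in S$ perpendicular to all of $S^\perp$, so $S$ lies in the radical of $b|_{S^\perp}$. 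Hence $b$ descends to a well-defined form $\bar b$ on $S^\perp/S$; in the orthogonal cases one also checks $Q(v+w)=Q(v)$ for $v\in S^\perp$ and $w\in S$ (using $Q(w)=0$ and $b(v,w)=0$), so $Q$ descends to $\bar Q$.

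The key step is a hyperbolic decomposition. Starting from a basis $e_1,\dots,e_s$ of $S$, I would build hyperbolic partners $f_1,\dots,f_s$ and write $V=H\perp W$, where $H=\langle e_1,f_1,\dots,e_s,f_s\rangle$ is a non-degenerate hyperbolic space of dimension $2s$ and $W=H^\perp$ carries the restricted form. Removing $s$ hyperbolic pairs does not touch the anisotropic kernel of the form, so $W$ is of the same type as $V$, with the same parameter $e$ and maximal isotropic dimension $d-s$; that is, $W$ is a model of $\bbP_{d-s}$. A short computation in these coordinates gives $S^\perp=S\oplus W$, so the projection $S^\perp\to W$ induces an isometry $S^\perp/S\cong W$, carrying $\bar b$ (resp. $\bar Q$) to the non-degenerate form on $W$. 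Thus the induced form is non-degenerate of the same type and rank $d-s$.

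It remains to identify the incidence structure $S^\perp/S$ of the statement with the polar space built on the vector space $S^\perp/S$. By the correspondence theorem, $T\mapsto T/S$ is an inclusion-preserving bijection between the subspaces $T$ of $V$ with $S\subseteq T\subseteq S^\perp$ and the subspaces of $S^\perp/S$. I would check that $T$ is isotropic in $\bbP_d$ if and only if $T/S$ is isotropic for the induced form: the forward implication is immediate, and conversely $\bar b(\bar u,\bar v)=b(u,v)$ and $\bar Q(\bar u)=Q(u)$ for $u,v\in T$ force $b$ and $Q$ to vanish on $T$, using $S\subseteq T\subseteq S^\perp$ and that $S$ is isotropic. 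Since the bijection respects inclusion, it is an isomorphism of incidence structures between the subspaces of $\bbP_d$ through $S$ and the subspaces of $\bbP_{d-s}$, which is what is claimed.

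I expect the main obstacle to be the uniform treatment of all six types in the decomposition step, and in particular the orthogonal cases in characteristic two. There the bilinear form $b$ associated with $Q$ is alternating and can be degenerate (the parabolic quadric has a one-dimensional nucleus), so the descent and the splitting must be argued through $Q$ rather than through $b$ alone, and one must verify that the anisotropic kernel governing $e$ — nontrivial for the elliptic and unitary types — is genuinely preserved when the hyperbolic pairs are split off.
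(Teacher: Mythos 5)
The paper offers no proof of this lemma; it is quoted directly from Tits \cite[Section 8.5.2]{Tits1974}. Your argument is the standard one behind that citation --- descend the form to $S^\perp/S$, split off $s$ hyperbolic pairs to get $V=H\perp W$ with $S^\perp=S\oplus W$, and transport isotropic subspaces through the correspondence theorem --- and it is correct, including your accurate identification of the one delicate point, namely the parabolic quadric in characteristic two, where the descent and the splitting must be run through the quadratic form $Q$ rather than the (degenerate, alternating) associated bilinear form.
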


\begin{lemma}\label{lem:gens_on_comax}
  A $(d-1)$-dimensional subspace of $\bbP_d$ lies in $q^e+1$ generators.
\end{lemma}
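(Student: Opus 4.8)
The plan is to reduce the count to the rank-one case by passing to the quotient geometry of $S$ and then invoking Lemma~\ref{lem:basic_quotient_geom}. First I would fix a $(d-1)$-dimensional isotropic subspace $S$ of $\bbP_d$ and set up a bijection between the generators of $\bbP_d$ through $S$ and certain subspaces of the quotient $S^\perp/S$. The key elementary observation is that if $G$ is a generator containing $S$, then $G$ is isotropic and hence $S \subseteq G \subseteq G^\perp \subseteq S^\perp$, so $G$ automatically lies in $S^\perp$ and its image $G/S$ is a well-defined subspace of $S^\perp/S$ of dimension $d-(d-1)=1$.

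Next I would record that $G \mapsto G/S$ is a bijection. By the standard correspondence theorem it is a bijection between the subspaces of $S^\perp$ that contain $S$ and the subspaces of $S^\perp/S$, shifting dimension by $\dim S = d-1$; and by Lemma~\ref{lem:basic_quotient_geom} this identification sends isotropic subspaces to isotropic subspaces of the quotient geometry. Under this bijection the $d$-dimensional isotropic subspaces $G \supseteq S$, that is the generators through $S$, correspond exactly to the $1$-dimensional isotropic subspaces of $S^\perp/S$. Applying Lemma~\ref{lem:basic_quotient_geom} with $s = d-1$ identifies $S^\perp/S$ with a polar space $\bbP_1$ of rank $1$ (of the same type, $q$ and $e$), whose generators are precisely its points.

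It then remains to count the points of a rank-one polar space. Specialising the vertex count $v_0 = (q^{d-1+e}+1)\gaussm{d}$ to $d=1$ gives $v_0 = (q^{e}+1)\gaussm{1} = q^{e}+1$, using $\gaussm{1}=1$; the same value can be read off directly for each of the six types. Combining this with the bijection above yields exactly $q^{e}+1$ generators through $S$, as claimed. I do not anticipate a serious obstacle: the only step that needs genuine care is checking that the quotient map carries generators through $S$ precisely onto the generators (maximal isotropic subspaces) of $\bbP_1$ and does so bijectively, and this follows from the dimension shift by $d-1$ together with the isomorphism of Lemma~\ref{lem:basic_quotient_geom}.
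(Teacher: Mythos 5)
Your proposal is correct and follows exactly the paper's argument: pass to the quotient geometry $S^\perp/S \cong \bbP_1$ via Lemma \ref{lem:basic_quotient_geom}, identify generators through $S$ with points of $\bbP_1$, and count those points as $q^e+1$. You simply spell out the bijection and the rank-one point count in more detail than the paper's two-line proof.
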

\begin{proof}
  By Lemma \ref{lem:basic_quotient_geom}, the number of generators through a $(d-1)$-dimensional subspace
  corresponds to the number of points in $\bbP_1$. This is $q^e+1$.
\end{proof}

\begin{lemma}[{\cite[Section 26.1]{Hirschfeld1991}}]\label{lem:basic_perp_property}
  Let $p$ be a point of $\bbP_d$ and $S$ an $s$-dimensional subspace of $\bbP_d$. Then
  $S \subseteq p^\perp$ or $S \cap p^\perp$ is a $(s-1)$-dimensional subspace of $S$.
\end{lemma}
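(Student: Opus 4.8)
The plan is to reduce the claim to linear algebra in the ambient vector space $V$. Write $p = \langle v\rangle$ and let $B$ be the sesquilinear form defining $\bbP_d$; when $\bbP_d$ is defined by a quadratic form $Q$, take $B$ to be its polarization $B(x,y) = Q(x+y) - Q(x) - Q(y)$. I would regard $p^\perp$ as the orthogonal complement $\{\, w \in V : B(w,v) = 0 \,\}$ of $p$ in $V$. Because every point of the isotropic subspace $S$ is itself isotropic, a point of $S$ lies in the incidence-structure perp of $p$ exactly when it is perpendicular to $p$ with respect to $B$; hence the linear intersection $S \cap p^\perp$ computes the intersection asserted in the statement.

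First I would check that $p^\perp$ is a hyperplane of $V$, i.e. that the linear functional $\ell\colon w \mapsto B(w,v)$ is nonzero, equivalently that $v$ does not lie in the radical of $B$. For the symplectic and Hermitian forms, for all orthogonal forms in odd characteristic, and for $O^+(2d,q)$ and $O^-(2d+2,q)$ in even characteristic, $B$ is nondegenerate, so its radical is trivial and there is nothing to check. The sole exception is the parabolic quadric $O(2d+1,q)$ in even characteristic, where the radical of $B$ is the $1$-dimensional nucleus $N$. Here I would use that $N$ is anisotropic, i.e. $Q$ does not vanish on $N$; since $p$ is a point of $\bbP_d$ it is isotropic and therefore $p \neq N$, so again $v \notin \mathrm{rad}(B)$. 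In every case $\ell \neq 0$ and $\dim p^\perp = \dim V - 1$.

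Granting that $p^\perp$ has codimension $1$, the conclusion is immediate from the dimension formula $\dim(S + p^\perp) = \dim S + \dim p^\perp - \dim(S \cap p^\perp)$. The subspace $S + p^\perp$ contains the hyperplane $p^\perp$, so it equals either $p^\perp$ or all of $V$. In the former case $S \subseteq p^\perp$, which is the first alternative. In the latter case $\dim(S + p^\perp) = \dim V$, and substituting $\dim p^\perp = \dim V - 1$ yields $\dim(S \cap p^\perp) = \dim S - 1 = s - 1$, so $S \cap p^\perp$ is an $(s-1)$-dimensional subspace of $S$, the second alternative.

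I expect the only genuine obstacle to be the even-characteristic parabolic case, where the polarization of $Q$ is degenerate and one must verify that no isotropic point meets its radical; every other case is covered by the standard fact that a hyperplane meets any subspace in codimension at most one. One could alternatively phrase the whole argument in the language of the polarity attached to $\bbP_d$, which sends points to geometric hyperplanes, but the direct computation above seems the most self-contained.
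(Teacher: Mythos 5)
Your proof is correct, but note that the paper does not actually prove this lemma at all: it is quoted as a standard fact with a citation to Hirschfeld--Thas, Section~26.1, so there is no in-paper argument to compare against. Your self-contained derivation is sound. The reduction of the incidence-theoretic $p^\perp$ (defined in the paper via $\langle p, T\rangle$ being isotropic) to the linear condition $B(w,v)=0$ is valid precisely because both $p$ and the points of $S$ are isotropic, so that $Q(\alpha v+\beta w)=\alpha\beta B(v,w)$ in the quadratic cases; you state this identification and it is the one step that genuinely needs the isotropy hypothesis. You also correctly isolate the only delicate case, namely $O(2d+1,q)$ in even characteristic, where the polarization is an alternating form on an odd-dimensional space and hence has a one-dimensional radical (the nucleus); your observation that the nucleus is anisotropic while $p$ is isotropic, so $v\notin\mathrm{rad}(B)$, closes that gap. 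From there the hyperplane-meets-subspace dimension count gives exactly the dichotomy of the statement, with the paper's convention that dimensions are vector-space dimensions. In short: the paper outsources this to the literature, and your argument is an acceptable elementary replacement for that citation.
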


\begin{lemma}\label{lem:Hxyz_pw_dis}
  Let $P$ be a generator of $\bbP_d$ and let $\ell$ be a line of $\bbP_d$ that meets $P$ trivially.
  Then $\ell^\perp \cap P$ is a $(d-2)$-dimensional subspace of $P$ and $\ell^\perp \cap P \subseteq x^\perp \cap P \nsubseteq y^\perp \cap P$
  for all points $x, y \in \ell$, $x \neq y$.
\end{lemma}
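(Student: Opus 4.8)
The plan is to reduce the whole statement to two facts: that for each point on $\ell$ its perp meets the generator $P$ in a hyperplane of $P$, and that for two distinct points these hyperplanes are forced to be different by the maximality of generators. I would fix an arbitrary pair of distinct points $x, y \in \ell$; since the argument below is uniform in this choice and any two distinct points of the line $\ell$ span it, this suffices for the ``for all'' in the statement. Because $\ell = \langle x, y\rangle$ and a subspace is perpendicular to $\ell$ exactly when it is perpendicular to both $x$ and $y$, we have $\ell^\perp \cap P = (x^\perp \cap P) \cap (y^\perp \cap P)$, so it is enough to understand the two factors on the right.

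Next I would show that $x^\perp \cap P$ is a $(d-1)$-dimensional subspace of $P$. Applying Lemma \ref{lem:basic_perp_property} to the point $x$ and the $d$-dimensional subspace $P$, either $P \subseteq x^\perp$ or $x^\perp \cap P$ has dimension $d-1$. The first alternative is impossible: since $\ell \cap P$ is trivial and $x \subseteq \ell$, the point $x$ is not contained in $P$, so if also $P \subseteq x^\perp$ then $\langle x, P\rangle$ would be totally isotropic of dimension $d+1$, contradicting Lemma \ref{lem:gen_max_dimension}. Hence $x^\perp \cap P$, and symmetrically $y^\perp \cap P$, is a hyperplane of $P$.

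The key step is to show these two hyperplanes are distinct. Suppose instead $x^\perp \cap P = y^\perp \cap P =: H$, a $(d-1)$-dimensional subspace of $P$ perpendicular to both $x$ and $y$, hence to all of $\ell$. Then $\langle H, \ell\rangle$ is totally isotropic, and since $H \subseteq P$ meets $\ell$ trivially its dimension is $(d-1)+2 = d+1$, again contradicting Lemma \ref{lem:gen_max_dimension}. Therefore $x^\perp \cap P$ and $y^\perp \cap P$ are distinct hyperplanes of the $d$-dimensional space $P$.

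The two conclusions now follow directly. As the intersection of two distinct hyperplanes of $P$, the subspace $\ell^\perp \cap P = (x^\perp \cap P)\cap(y^\perp \cap P)$ has dimension $d-2$. The inclusion $\ell^\perp \cap P \subseteq x^\perp \cap P$ is immediate from $x \subseteq \ell$, and since $x^\perp \cap P$ and $y^\perp \cap P$ are distinct subspaces of the same dimension $d-1$ neither can contain the other, giving $x^\perp \cap P \nsubseteq y^\perp \cap P$. I expect the only genuine content to lie in the distinctness argument of the third step, which is where the hypothesis $\ell \cap P = 0$ and the maximality of generators are both essential; the remaining steps are routine applications of Lemmas \ref{lem:basic_perp_property} and \ref{lem:gen_max_dimension} together with the dimension formula for two hyperplanes.
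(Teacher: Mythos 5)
Your proof is correct and follows essentially the same route as the paper: both arguments rest on Lemma \ref{lem:basic_perp_property} to get hyperplane sections and on Lemma \ref{lem:gen_max_dimension} applied to an isotropic span of dimension exceeding $d$. The only difference is the order of deductions — the paper first pins down $\dim(\ell^\perp \cap P) = d-2$ via the span $\langle \ell, \ell^\perp \cap P\rangle$ and reads off the distinctness of the hyperplanes from that, while you prove distinctness first and obtain the dimension as a corollary; the underlying computation is the same.
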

\begin{proof}
  By Lemma \ref{lem:basic_perp_property}, $\ell^\perp \cap P$ is a subspace of at least dimension $d-2$.
  The isotropic subspace $\langle \ell, \ell^\perp \cap P \rangle$ satisfies
  \begin{align*}
    d &\geq \dim(\langle \ell, \ell^\perp \cap P \rangle) \\
    &= \dim(\ell) + \dim(\ell^\perp \cap P) - \dim(\ell \cap \ell^\perp \cap P)\\
    &= \dim(\ell) + \dim(\ell^\perp \cap P) = 2 + \dim(\ell^\perp \cap P).
  \end{align*}
  Hence, $\ell^\perp \cap P$ has dimension $d-2$.
  Clearly, for all points $x \in \ell$ we have $\ell^\perp \subseteq x^\perp$.
  This implies $\ell^\perp \cap P \subseteq x^\perp \cap P$.
  As $\ell^\perp = \langle x, y \rangle^\perp$, but $x^\perp \cap P$ and $y^\perp \cap P$ are hyperplanes of $P$ by Lemma \ref{lem:basic_perp_property},
  $x^\perp \cap P \neq y^\perp \cap P$.
\end{proof}

\section{Subgeometry Switching}\label{sec:switching}

In this section we describe a switching operation for the collinearity graphs
of polar graphs that creates new strongly regular graphs with the same parameters.
Godsil-McKay switching needs a subset $Y$ of vertices such that 
$Y$ is regular and all vertices not in $Y$ are adjacent to $0$, $|Y|/2$ or $|Y|$ vertices of $Y$.
In the following we describe a switching operation (limited to the considered graphs) for a specific subset $Y$ where all vertices
not in $Y$ are adjacent to $0$, $|Y|/m$ or $|Y|$ vertices of $Y$ for some integer $m \geq 2$.
For $m=2$ one case of our switching corresponds to a Godsil-McKay switching of $\Gamma_0$ (see Remark \ref{rem:switching}).

\begin{construction}\label{constr:main}
Let $X$ be the point set of a polar space $\bbP_d$ (of rank $d$).
Let $L$ be a subspace of $\bbP_d$ of dimension $d-1$.
Then $L$ lies in $q^e+1$ subspaces $P_1, P_2, \ldots, P_{q^e+1}$ of $\bbP_d$ of dimension $d$  by Lemma \ref{lem:gens_on_comax}.
In the following we slightly abuse notation and write $P_i$ without specifying that $i \in \{ 1,\ldots, q^e+1 \}$.
Each $P_i \setminus L$ can be identified with the affine geometry of dimension $d-1$ over $\bbF_q$.
We write $\tilde{P}_i$ for the affine geometry $P_i \setminus L$.
We partition $X$ as follows:
\begin{enumerate}[i.]
 \item the points of $L$,
 \item the points of $L^\perp \setminus L = \bigcup_{i=1}^{q^e+1} \tilde{P}_i$,
 \item the points of $\bbP_d$ not in $L^\perp$. We denote this set by $Z$.
\end{enumerate}

Let $\sigma_i$ be a permutation (not necessarily collineation) of the hyperplanes of $\tilde{P}_i$ that 
preserves the parallel classes of $\tilde{P}_i$.
Let $z \in Z$. By Lemma \ref{lem:basic_perp_property} and $L \nsubseteq z^\perp$, $z^\perp \cap P_i$ is a $(d-1)$-subspace of $P_i$.
In particular, we can identify $z^\perp \cap P_i$ with a hyperplane of $\tilde{P}_i$ and let $\sigma_i$
act on it. Now we are ready to define our switched graph.

Let $\Gamma(L, \sigma_1, \ldots, \sigma_{q^e+1})$ be a graph with $X$ as vertices, where adjacency for two different vertices $x, y$ is defined as follows:
\begin{enumerate}[i.]
 \item If $x \in \tilde{P}_i$ and $y \in Z$, then $x$ and $y$ are adjacent if $x \in (y^\perp \cap \tilde{P}_i)^{\sigma_i}$.
 \item If $x \in Z$ and $y \in \tilde{P}_i$, then this follows by symmetry from the previous case.
 \item Otherwise, $x$ and $y$ are adjacent if $x \in y^\perp$.
\end{enumerate}
\end{construction}

\begin{rem}\label{rem:switching}
  Consider the case $q=2$. Here each parallel class contains only $2$ hyperplanes, so
  we can define $\sigma_i$ by $\sigma_i(H) = \tilde{P}_i \setminus H$.
  Then our construction corresponds to the construction described in \cite{Barwick2016}
  for $s=g-1$ (in the notation of \cite{Barwick2016}).
  So in this case our construction can be seen as a switching set in the usual sense.
\end{rem}

Throughout this section, we fix our choice for $L$ and all $\sigma_i$.
For the rest of this section we denote $\Gamma(L, \sigma_1, \ldots, \sigma_{q^e+1})$ by $\Gamma$.
Notice that $\Gamma(L, \id_1, \ldots, \id_{q^e+1})$ is the usual collinearity graph 
of the polar space, so $\Gamma_0 = \Gamma(L, \id_1, \ldots, \id_{q^e+1})$.
We will refer to adjacency in $\Gamma_0$ by \textit{$\Gamma_0$-adjacency} and adjacency in $\Gamma$ by \textit{$\Gamma$-adjacency}.

Our main result is the following.

\begin{theorem}\label{thm:construction}
  The graph $\Gamma(L, \sigma_1, \ldots, \sigma_{q^e+1})$ is strongly regular with the parameters $(v_0, k_0, \lambda_0, \nu_0)$.
\end{theorem}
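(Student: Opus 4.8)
The plan is to produce a single real orthogonal matrix $Q$ that conjugates the adjacency matrix $A_0$ of $\Gamma_0$ into the adjacency matrix $A$ of $\Gamma$ while fixing the all-ones vector $\vj$; that is, I would find an orthogonal $Q$ with $Q\vj=\vj$ and $A=Q^{-1}A_0Q$. Granting this, strong regularity is mechanical. Since $\Gamma_0$ is strongly regular we have $A_0^2 = k_0\vI + \lambda_0 A_0 + \mu_0(\vJ-\vI-A_0)$ and $A_0\vj=k_0\vj$. Conjugating by $Q$ and using $Q^{-1}\vJ Q=\vJ$ (which holds because $\vJ=\vj\vj^{\top}$ and $Q^{-1}\vj=Q^{\top}\vj=\vj$) gives $A^2=k_0\vI+\lambda_0 A+\mu_0(\vJ-\vI-A)$ together with $A\vj=k_0\vj$. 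As $A$ is the symmetric $0/1$ zero-diagonal adjacency matrix of $\Gamma$, these two identities are exactly the assertion that $\Gamma$ is strongly regular with the parameters of $\Gamma_0$.

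To build $Q$ I would look for it block-diagonal with respect to the partition $(L,\tilde P_1,\dots,\tilde P_{q^e+1},Z)$ of Construction \ref{constr:main}, acting as the identity on $L$ and on $Z$ and as an orthogonal map $Q_i$ on each $\tilde P_i$. First I would record which blocks of $A_0$ are affected. Every point of $\tilde P_i$ lies in the generator $P_i$, so the $\tilde P_i$-block is the complete-graph matrix $\vJ-\vI$ and the $(\tilde P_i,L)$-block is all-ones. For $i\neq j$ the points of $\tilde P_i$ and $\tilde P_j$ are pairwise non-collinear: a collinear pair would span an isotropic line meeting $L$ trivially, which projects via Lemma \ref{lem:basic_quotient_geom} to a line of $\bbP_1=L^{\perp}/L$, and $\bbP_1$ has none by Lemma \ref{lem:gen_max_dimension}; hence the $(\tilde P_i,\tilde P_j)$-blocks vanish. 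The only blocks where $\Gamma$ differs from $\Gamma_0$ are the $(\tilde P_i,Z)$-blocks: the $z$-column of the $\Gamma_0$-block $N_i$ is the indicator $\mathbf 1_{h_z}$ of the affine hyperplane $h_z=z^{\perp}\cap\tilde P_i$, while in the $\Gamma$-block $N_i'$ it is $\mathbf 1_{h_z^{\sigma_i}}$. A short computation of $Q^{-1}A_0Q$ then shows that $A=Q^{-1}A_0Q$ is equivalent to the per-$i$ conditions that each $Q_i$ is orthogonal, fixes $\vj$, commutes with $\vJ-\vI$, and satisfies $Q_i^{-1}N_i=N_i'$. Fixing $\vj$ already forces $Q_i$ to commute with $\vJ-\vI$ and to preserve the all-ones $(\tilde P_i,L)$-block, so only the last condition requires work.

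The main point, and the step I expect to be the real obstacle, is constructing such a $Q_i$ on $\tilde P_i\cong\AG(d-1,q)$, where I would exploit that $\sigma_i$ preserves parallel classes. Grouping the affine hyperplanes by direction (equivalently, by their $(d-2)$-dimensional trace on $L$) gives $\gaussm{d-1}$ parallel classes, each a family of $q$ affine hyperplanes partitioning the $q^{d-1}$ points of $\tilde P_i$. For one class, the $q$ indicator vectors $\mathbf 1_h$ are pairwise orthogonal, of equal norm, and sum to $\vj$, so they form an orthogonal basis of the $q$-dimensional space $U$ they span; and spaces $U,U'$ from different classes meet only in $\bbR\vj$. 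This yields an orthogonal decomposition $\bbR^{\tilde P_i}=\bbR\vj\perp\bigoplus(U\cap\vj^{\perp})$ of total dimension $1+\gaussm{d-1}(q-1)=q^{d-1}$. I would then define $Q_i$ as the identity on $\bbR\vj$ and, on each class, as the orthogonal map permuting the orthogonal basis $\{\mathbf 1_h\}$ according to $\sigma_i^{-1}$; each such permutation fixes $\vj$ and hence preserves $U\cap\vj^{\perp}$, so the pieces glue to a well-defined real orthogonal $Q_i$ with $Q_i\vj=\vj$ and $Q_i\mathbf 1_h=\mathbf 1_{h^{\sigma_i^{-1}}}$ for every affine hyperplane $h$. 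Applied to the columns of $N_i$ this is exactly $Q_i^{-1}N_i=N_i'$, completing the verification. The crux is thus the elementary but pivotal observation that permuting the parallel hyperplanes within a parallel class permutes an orthogonal basis, which is precisely what turns the combinatorial permutation $\sigma_i$ into a spectrum-preserving orthogonal conjugation.
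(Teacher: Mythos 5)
Your proof is correct, but it takes a genuinely different route from the paper. The paper proves Theorem \ref{thm:construction} by brute-force geometric counting: a sequence of lemmas computes, for each pair of parts of the partition $(L,\tilde P_1,\dots,\tilde P_{q^e+1},Z)$, the degree and the number of common neighbours of the relevant pairs of vertices, checking directly that they equal $k_0$, $\lambda_0$ and $\mu_0$ (the key counting tools being Lemma \ref{lem:gens_on_comax}, Lemma \ref{lem:two_hyps_in_Pi}, and the one-point-per-line Lemmas \ref{lem:one_point_per_line_case1} and \ref{lem:one_point_per_line_case2}). You instead exhibit an explicit orthogonal $Q$ with $Q\vj=\vj$ and $A=Q^{-1}A_0Q$, and your key step checks out: within one parallel class the $q$ hyperplane indicators are pairwise orthogonal of equal norm and sum to $\vj$, and for hyperplanes $h,h'$ in different classes one has $\langle \mathbf{1}_h-\tfrac1q\vj,\mathbf{1}_{h'}-\tfrac1q\vj\rangle=q^{d-3}-2q^{d-3}+q^{d-3}=0$, so the centered class-spaces are pairwise orthogonal and the dimension count $1+(q-1)\gaussm{d-1}=q^{d-1}$ shows your decomposition is complete; hence $Q_i$ glues to an orthogonal map with $Q_i^{-1}\mathbf{1}_h=\mathbf{1}_{h^{\sigma_i}}$, and the block computation of $Q^{-1}A_0Q$ matches $A$ exactly (the off-diagonal blocks you need to be trivial are justified by the same quotient-geometry argument as Lemma \ref{lem:cliques}). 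It is worth noting that your argument essentially answers, for this construction, the paper's final open question about formulating a generalization of Godsil--McKay switching in matrix terms: your $Q_i$, restricted to a parallel class, is precisely the generalization of the $\tfrac{2}{|Y|}\vJ-\vI$ block of classical GM switching to $m=q$ parts. What the paper's approach buys in exchange for the longer case analysis is the detailed local information (numbers of common neighbours of specific configurations) that is reused in Section \ref{sec:noniso} to prove non-isomorphy; your approach yields strong regularity and cospectrality more conceptually but would still need those counts for the non-isomorphism argument.
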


We shall proof this theorem in several short lemmas.

\begin{lemma}\label{lem:cliques}
  A point $x \in \tilde{P}_i$ is not $\Gamma$-adjacent to any point of $\tilde{P}_j$ for $i \neq j$.
\end{lemma}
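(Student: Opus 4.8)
The plan is to reduce the statement to a fact about the original collinearity graph $\Gamma_0$ and then settle it by a dimension count. First I would observe that both $x \in \tilde{P}_i$ and $y \in \tilde{P}_j$ lie in $L^\perp \setminus L$, so neither of them is in $Z$. Consequently the switching rules (cases i and ii of the adjacency definition) never apply to the pair $\{x,y\}$, and their $\Gamma$-adjacency is governed entirely by case iii: the points $x$ and $y$ are $\Gamma$-adjacent if and only if $x \in y^\perp$. Thus the lemma is equivalent to showing that $x \notin y^\perp$, i.e.\ that points of distinct $\tilde{P}_i$ are never $\Gamma_0$-adjacent.

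The heart of the argument is the claim that $y^\perp \cap P_i = L$. On the one hand, $P_j$ is isotropic and contains both $L$ and the point $y$, so every point of $L$ is perpendicular to $y$; hence $L \subseteq y^\perp$, and since $L \subseteq P_i$ this gives $L \subseteq y^\perp \cap P_i$. On the other hand, $P_i$ is a generator, so $P_i \nsubseteq y^\perp$: otherwise $\langle P_i, y \rangle$ would be an isotropic subspace of dimension $d+1$, contradicting Lemma \ref{lem:gen_max_dimension}. By Lemma \ref{lem:basic_perp_property}, $y^\perp \cap P_i$ is therefore a hyperplane of $P_i$, i.e.\ it has dimension $d-1$. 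Since $L$ also has dimension $d-1$ and is contained in $y^\perp \cap P_i$, the two subspaces coincide.

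Finally, $x \in \tilde{P}_i = P_i \setminus L$ means $x \notin L = y^\perp \cap P_i$, so $x \notin y^\perp$, and by the reduction above $x$ and $y$ are not $\Gamma$-adjacent. Note that here I am tacitly using that $P_i \cap P_j = L$ (both are distinct generators of dimension $d$ containing the $(d-1)$-dimensional $L$), which is what guarantees $x \notin P_j$ and keeps the two affine pieces genuinely disjoint.

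I expect the only nontrivial step to be the equality $y^\perp \cap P_i = L$; everything else is bookkeeping. The point to get right is that $L$ already exhausts a full hyperplane of $P_i$ sitting inside $y^\perp$, so once the case $P_i \subseteq y^\perp$ is excluded by the maximality of generators, the dimension count leaves no room for $x$ (or indeed for any point of $\tilde{P}_i$) to be perpendicular to $y$.
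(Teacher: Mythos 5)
Your proof is correct, but it takes a different route from the paper's. You first make the same reduction (adjacency between $\tilde{P}_i$ and $\tilde{P}_j$ is untouched by the switching, so the claim is about $\Gamma_0$), but then you argue by computing $y^\perp \cap P_i$ exactly: $L \subseteq y^\perp$ because $\langle L, y\rangle \subseteq P_j$ is isotropic, $P_i \nsubseteq y^\perp$ because otherwise $\langle P_i, y\rangle$ would be an isotropic $(d+1)$-space contradicting Lemma \ref{lem:gen_max_dimension}, and then Lemma \ref{lem:basic_perp_property} forces $y^\perp \cap P_i$ to be a hyperplane of $P_i$ containing the $(d-1)$-space $L$, hence equal to $L$. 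The paper instead passes to the quotient geometry: if $x$ and $y$ were collinear, $\langle L, x, y\rangle/L$ would be a line of $L^\perp/L \cong \bbP_1$ (Lemma \ref{lem:basic_quotient_geom}), which has no lines. Both arguments bottom out in the maximality of generators (indeed $\langle L,x,y\rangle = \langle P_i, y\rangle$ is exactly the forbidden $(d+1)$-space), so they are close cousins; the quotient-geometry phrasing is shorter, while your version yields the stronger and reusable fact that $y^\perp \cap P_i = L$ for every $y \in \tilde{P}_j$. Your side remarks ($P_i \cap P_j = L$, hence $y \notin P_i$) are also correct and needed to ensure $\langle P_i, y\rangle$ really has dimension $d+1$.
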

\begin{proof}
  Adjacencies between $\tilde{P}_i$ and $\tilde{P}_j$ do not change between $\Gamma$ and $\Gamma_0$.
  Suppose that there is a point $x \in \tilde{P}_i$ and a point $y \in \tilde{P}_j$ such that $\langle x, y \rangle$
  is a line of $\bbP_d$. Then $\langle L, x, y \rangle/L$ is a line of $L^\perp/L$.
  By Lemma \ref{lem:basic_quotient_geom}, $L^\perp/L \cong \bbP_{1}$, so $L^\perp/L$ contains no lines.
  This is a contradiction.
\end{proof}

\begin{lemma}\label{lem:one_point_per_line_case1}
  Let $\ell$ be a line of $\bbP_d$, which meets $\tilde{P}_j$ in a point, and let $x \in \tilde{P}_i$ with $i \neq j$. 
  Then $x$ is $\Gamma$-adjacent to exactly one point of $\ell$.
\end{lemma}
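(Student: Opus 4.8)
The plan is to track where the $q+1$ points of $\ell$ fall in the partition of $X$, and then to reduce the adjacency count to a statement about the affine hyperplanes of $\tilde{P}_i$ that the relevant perpendicularity conditions cut out. First I would pin down the distribution of the points of $\ell$. Let $p$ be the point in which $\ell$ meets $\tilde{P}_j$. Then $p \in \tilde{P}_j \subseteq L^\perp$, but $\ell \not\subseteq L^\perp$: indeed, if $\ell \subseteq L^\perp$ then $\langle L, \ell \rangle / L$ is an isotropic subspace of $L^\perp/L \cong \bbP_1$ by Lemma \ref{lem:basic_quotient_geom}, hence of dimension at most $1$, so $\langle L, \ell \rangle$ is the unique generator $P_j$ through $L$ and $p$; this would force $\ell \subseteq P_j$ and $\ell \cap \tilde{P}_j = \ell \setminus L$ to consist of $q > 1$ points, contradicting the hypothesis. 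Therefore the subspace $\ell \cap L^\perp$ of $\ell$ equals $\{p\}$, and the remaining $q$ points of $\ell$ lie outside $L^\perp$, i.e. in $Z$ (they are not in $L$ since $L \subseteq L^\perp$). Since $i \neq j$, Lemma \ref{lem:cliques} shows that $x$ is not $\Gamma$-adjacent to $p$, so it remains to prove that $x$ is $\Gamma$-adjacent to exactly one of the $q$ points of $\ell \cap Z$.

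For $z \in \ell \cap Z$, by construction $x$ is $\Gamma$-adjacent to $z$ if and only if $x \in (z^\perp \cap \tilde{P}_i)^{\sigma_i}$, so the whole problem reduces to understanding the $q$ affine hyperplanes $z^\perp \cap \tilde{P}_i$ of $\tilde{P}_i$. The key claim I would establish is that these $q$ hyperplanes are precisely the $q$ members of a single parallel class of $\tilde{P}_i$. Granting the claim, the conclusion is immediate: $\sigma_i$ preserves parallel classes, so the collection $\{(z^\perp \cap \tilde{P}_i)^{\sigma_i} : z \in \ell \cap Z\}$ is again that parallel class, and therefore a partition of the point set of $\tilde{P}_i$; hence $x$ lies in exactly one block, i.e. exactly one $z$ is $\Gamma$-adjacent to $x$.

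The crux, and the step I expect to be the main obstacle, is the parallel-class claim. First I would check that $\ell$ meets $P_i$ trivially: from $P_i \subseteq L^\perp$ we get $\ell \cap P_i \subseteq \ell \cap L^\perp = \{p\}$, and $p \notin P_i$ because two distinct generators through $L$ meet exactly in $L$ while $p \notin L$. This lets me invoke Lemma \ref{lem:Hxyz_pw_dis} with $P = P_i$: the spaces $z^\perp \cap P_i$ are pairwise distinct hyperplanes of $P_i$, all containing the fixed $(d-2)$-dimensional subspace $\ell^\perp \cap P_i$, and each is a genuine affine hyperplane of $\tilde{P}_i$ since it cannot contain $L$ (as $z \notin L^\perp$). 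To see that they are mutually parallel, I would compute their parts at infinity $z^\perp \cap L$, using $L \subseteq P_i$: because $p \in L^\perp$ gives $L \subseteq p^\perp$, we obtain $\ell^\perp \cap L = p^\perp \cap z^\perp \cap L = z^\perp \cap L$, a $(d-2)$-dimensional subspace of $L$ that is independent of $z$. Thus all $q$ hyperplanes share the same hyperplane at infinity and hence lie in one parallel class; being $q$ distinct members of a class that contains exactly $q$ hyperplanes, they exhaust it. This establishes the claim and completes the argument.
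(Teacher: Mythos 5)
Your proof is correct and follows essentially the same route as the paper: both identify the $q$ hyperplanes $z^\perp \cap \tilde{P}_i$ (for $z \in \ell$, $z \neq p$) as the full parallel class with common part at infinity $\ell^\perp \cap L$, and then conclude from the facts that this class partitions the points of $\tilde{P}_i$ and that $\sigma_i$ preserves parallel classes. Your write-up is somewhat more explicit than the paper's (e.g., verifying that all points of $\ell$ other than $p$ lie in $Z$ and that $\ell$ meets $P_i$ trivially before invoking Lemma \ref{lem:Hxyz_pw_dis}), but the underlying argument is the same.
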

\begin{proof}
  Let $y$ be the intersection of $\ell$ with $\tilde{P}_j$.
  Then $L \subseteq y^\perp$, so $L' := \ell^\perp \cap L$ is a $(d-2)$-space or a $(d-1)$-space.
  If $L'$ is a $(d-1)$-space, then $\langle L, \ell \rangle$ is an isotropic $(d+1)$-space.
  This contradicts Lemma \ref{lem:gen_max_dimension}.
  Hence, $L'$ has dimension $d-2$ and corresponds to the parallel class $\scrC$ of hyperplanes of $\tilde{P}_i$
  which contain $L'$ at infinity.
  So for $z \in \ell$ with $z \neq y$ we have $z^\perp \cap \tilde{P}_i \in \scrC$
  and, more generally by Lemma \ref{lem:Hxyz_pw_dis}, $\scrC = \{ z^\perp \cap \tilde{P}_i: z \in \ell, z \neq y \}$.
  As $\scrC$ partitions the points of $\tilde{P}_i$ and $\sigma_i$ preserves parallel classes, 
  $x$ is $\Gamma$-adjacent to exactly one $z \in \ell$ with $z \neq y$.
\end{proof}

\begin{lemma}\label{lem:one_point_per_line_case2}
  Let $\ell$ be a line of $\bbP_d$, which meets $P_i$ in a point, and let $x \in \tilde{P}_i$. 
  If $x$ is $\Gamma$-adjacent to one point of $\ell \setminus P_i$, then $x$ is $\Gamma$-adjacent to all
	points of $\ell$.
\end{lemma}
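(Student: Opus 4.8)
The plan is to locate the points of $\ell$ inside the partition of $X$ and then to exploit that $p := \ell\cap P_i$ is perpendicular to all of $P_i$.

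First I would determine where the points of $\ell$ lie. Since $L\subseteq P_i$ and $P_i$ is isotropic, $P_i\subseteq L^\perp$, so $p\in\ell\cap L^\perp$; thus $\ell\cap L^\perp$ is a subspace of the line $\ell$ containing $p$, hence equals $\{p\}$ or all of $\ell$. If $\ell\subseteq L^\perp$, then every point of $\ell\setminus\{p\}$ lies in $L^\perp\setminus L$ (a point of $\ell\setminus\{p\}$ in $L$ would give $\ell=\langle p,w\rangle\subseteq P_i$, contradicting $\ell\cap P_i=\{p\}$) and in fact in some $\tilde P_k$ with $k\neq i$ (it cannot lie in $\tilde P_i$, again as $\ell\cap P_i=\{p\}$); by Lemma \ref{lem:cliques}, $x$ is $\Gamma$-adjacent to none of these, so the hypothesis fails and there is nothing to prove. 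Hence I may assume $\ell\cap L^\perp=\{p\}$, so that every point of $\ell\setminus\{p\}=\ell\setminus P_i$ lies in $Z$.

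The key step is the observation that $p\perp v$ for every $v\in P_i$, since $p\in P_i$ and $P_i$ is totally isotropic. Consequently, for $v\in P_i$ the condition $v\in z^\perp$ is unchanged when $z$ is replaced by any point of $\ell$ differing from it by a multiple of $p$. Fixing $z_0\in\ell\setminus\{p\}$, every $z\in\ell\setminus\{p\}$ is of this form relative to $z_0$, so $z^\perp\cap P_i=z_0^\perp\cap P_i=:H$ for all $z\in\ell\setminus\{p\}$; that is, all these perp-hyperplanes of $P_i$ coincide. Moreover $H\neq L$: as $z_0\in Z$ we have $L\nsubseteq z_0^\perp$, so by Lemma \ref{lem:basic_perp_property} $z_0^\perp\cap P_i$ is a hyperplane of $P_i$ distinct from $L$. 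Thus there is a single fixed affine hyperplane $\tilde H:=H\cap\tilde P_i$ of $\tilde P_i$ with $z^\perp\cap\tilde P_i=\tilde H$ for all $z\in\ell\setminus\{p\}$.

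Finally I would assemble the adjacencies. For each $z\in\ell\setminus\{p\}\subseteq Z$, by construction $x$ is $\Gamma$-adjacent to $z$ iff $x\in(z^\perp\cap\tilde P_i)^{\sigma_i}=\tilde H^{\sigma_i}$, a condition independent of $z$; hence $x$ is $\Gamma$-adjacent to all $q$ of these points or to none. Since the hypothesis provides one such adjacency, $x$ is $\Gamma$-adjacent to every point of $\ell\setminus P_i$. It remains to note that $x\sim_\Gamma p$: both $x$ and $p$ lie in the generator $P_i$, so $x\in p^\perp$, and as the pair $(x,p)$ falls under the third adjacency rule of Construction \ref{constr:main} they are $\Gamma$-adjacent (when $p\in\tilde P_i$ and $x=p$, "all points of $\ell$" is read as the remaining $q$ points and the same computation applies). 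The only real content is the coincidence $z^\perp\cap P_i=z_0^\perp\cap P_i$, whose clean cause is $p\perp P_i$; the remaining work — ruling out that a point of $\ell\setminus P_i$ slips into a foreign $\tilde P_k$, via the $\ell\cap L^\perp$ dichotomy, and the trivial adjacency to $p$ — is bookkeeping.
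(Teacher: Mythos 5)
Your proof is correct and follows essentially the same route as the paper's: the key point in both is that every point of $\ell\setminus P_i$ cuts $P_i$ in the \emph{same} hyperplane $H=z^\perp\cap P_i=\ell^\perp\cap P_i$, so $\Gamma$-adjacency of $x$ to all of these points reduces to the single condition $x\in H^{\sigma_i}$. The only (cosmetic) difference is that the paper obtains this coincidence from a dimension count ($\dim(\ell^\perp\cap P_i)=d-1$ via Lemma \ref{lem:gen_max_dimension}) and dismisses the degenerate case with ``if $H=L$ the assertion is trivial,'' whereas you derive it from $p\perp P_i$ and explicitly place the points of $\ell$ within the partition $L\cup\bigcup_k\tilde P_k\cup Z$ --- a slightly more careful write-up of the same argument.
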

\begin{proof}
  We have that $H := \ell^\perp \cap P_i$ has dimension $d-1$ or $d$.
  If $H$ is a $d$-space, then $\langle \ell, P_i \rangle$ is an isotropic $(d+1)$-space.
  This contradicts Lemma \ref{lem:gen_max_dimension}. Hence, $H$ has dimension $d-1$.
  If $H = L$, then the assertion is trivial.
  If $H \neq L$, then $H$ and $H^{\sigma_i}$ correspond to a hyperplanes of $\tilde{P}_i$.
  By Lemma \ref{lem:basic_perp_property}, if $x \in H^{\sigma_i}$, then $x$ is $\Gamma$-adjacent to all points of $\ell$.
  If $x \notin H$, then, by Lemma \ref{lem:basic_perp_property}, $x$ is not adjacent to any points of $\ell$ except $\ell \cap P_i$.
  This shows the assertion.
\end{proof}

\begin{lemma}\label{lem:two_hyps_in_Pi}
  Let $H$ and $H'$ be two different $(d-1)$-dimensional subspaces of a generator $P$.
  Then $H^\perp \cap H'^\perp = P$.
\end{lemma}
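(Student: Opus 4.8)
The plan is to prove the two inclusions separately, with the real content concentrated in the identity $P^\perp = P$, which is just the maximality of $P$ as an isotropic subspace in disguise. Throughout I read $\perp$ in the polar-space sense fixed in Section~\ref{sec:prelim}, so that $H^\perp \cap H'^\perp$ is the set of isotropic points perpendicular to both $H$ and $H'$.

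First I would dispose of the easy inclusion $P \subseteq H^\perp \cap H'^\perp$. Since $H, H' \subseteq P$ and $P$ is isotropic, every point $p \in P$ satisfies $p \perp P$, hence $p \perp H$ and $p \perp H'$; moreover $\langle p, H\rangle$ and $\langle p, H'\rangle$ lie inside $P$ and are therefore isotropic, so $p \in H^\perp$ and $p \in H'^\perp$. For the reverse inclusion I would first record that $\langle H, H'\rangle = P$: both $H$ and $H'$ are $(d-1)$-dimensional inside the $d$-dimensional space $P$, and since $H \neq H'$ the span strictly contains $H$, forcing $\dim\langle H, H'\rangle = d$ and hence $\langle H, H'\rangle = P$. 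A point $x$ then lies in $H^\perp \cap H'^\perp$ exactly when $x \perp H$ and $x \perp H'$, i.e. when $x \perp \langle H, H'\rangle = P$, so that $H^\perp \cap H'^\perp = P^\perp$.

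It remains to show $P^\perp = P$. Given an isotropic point $x$ with $x \perp P$, the subspace $\langle x, P\rangle$ is again isotropic: a general vector $\lambda x + p$ with $p \in P$ satisfies $\q(\lambda x + p) = \lambda^2 \q(x) + \q(p) + \lambda b(x,p) = 0$, since $x$ and $p$ are isotropic and $x \perp P$ (with the analogous, even more direct, computation in the symplectic and Hermitian cases). By Lemma~\ref{lem:gen_max_dimension} an isotropic subspace has dimension at most $d = \dim P$, so $\langle x, P\rangle$ cannot properly contain $P$; hence $x \in P$. Together with the trivial inclusion $P \subseteq P^\perp$ this gives $P^\perp = P$, and the lemma follows. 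Alternatively, one may read $P^\perp = P$ straight off Lemma~\ref{lem:basic_quotient_geom}, since $P^\perp/P \cong \bbP_0$ has no points and therefore $P^\perp$ contains no isotropic point outside $P$.

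This lemma is largely dimension bookkeeping, and I expect the only genuine subtlety to be the correct reading of $\perp$: under the ambient vector-space orthogonal complement the statement would fail for the parabolic and elliptic types, where $\dim P^{\perp}$ exceeds $\dim P$. The conceptual heart is thus the step $P^\perp = P$, which is precisely where the maximality of generators (Lemma~\ref{lem:gen_max_dimension}) enters; everything else is a routine comparison of dimensions.
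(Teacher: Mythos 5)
Your proof is correct and follows essentially the same route as the paper: the paper simply notes that a point $p \in H^\perp \cap H'^\perp$ outside $P$ would make $\langle p, H, H'\rangle$ an isotropic $(d+1)$-space, contradicting Lemma~\ref{lem:gen_max_dimension}, which is exactly your chain $\langle H, H'\rangle = P$ and $P^\perp = P$ written as a one-line contradiction. Your version just makes explicit the intermediate steps (the span identity, the isotropy of $\langle x, P\rangle$, and the reading of $\perp$) that the paper leaves implicit.
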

\begin{proof}
  Suppose to the contrary that there is a point $p \in H^\perp \cap H'^\perp$ which is not in $P$.
  Then $\langle p, H, H' \rangle$ is an isotropic subspace of dimension $d+1$.
  This contradicts Lemma \ref{lem:gen_max_dimension}.
\end{proof}

First we establish that the graph is regular. If a point $x$ lies in $L$, then none of the 
adjacencies of $x$ change in our construction. If $x$ lies in $Z$, then $x$ is 
$\Gamma_0$-adjacent to the vertices in $x^\perp \cap \tilde{P}_i$. Hence, $x$ is
$\Gamma$-adjacent to the vertices in $(x^\perp \cap \tilde{P}_i)^{\sigma_i}$ which
by definition of $\sigma_i$ contains the same number of vertices as $x^\perp \cap \tilde{P}_i$.
The only non-trivial case is handled in the following lemma.

\begin{lemma}\label{lem:k_eq_k0}
  Let $x \in \tilde{P}_i$. Then $x$ is $\Gamma$-adjacent to $k_0$ vertices.
\end{lemma}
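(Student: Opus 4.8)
The plan is to show that our switching leaves the number of neighbours of $x$ unchanged, so that its degree equals the degree $k_0$ of the collinearity graph $\Gamma_0$. First I would localise where the degree can change. Since $x \in \tilde P_i$, the construction only alters edges between $\tilde P_i$ and $Z$: every edge from $x$ to a point of $L$ or of $\bigcup_j \tilde P_j$ is the same in $\Gamma$ and in $\Gamma_0$ (and by Lemma \ref{lem:cliques} there are no edges from $x$ to $\tilde P_j$ with $j \neq i$ in either graph). As $\Gamma_0$ is strongly regular of degree $k_0$, it is therefore enough to prove that $x$ has the same number of neighbours in $Z$ in $\Gamma$ as in $\Gamma_0$.

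Next I would introduce a fibre count. For $z \in Z$ we have $L \nsubseteq z^\perp$, hence $z \notin P_i^\perp$, so by Lemma \ref{lem:basic_perp_property} the set $z^\perp \cap P_i$ is a $(d-1)$-dimensional subspace of $P_i$ distinct from $L$; it corresponds to an affine hyperplane $H_z := z^\perp \cap \tilde P_i$ of $\tilde P_i$. Writing $N(H)$ for the number of $z \in Z$ with $H_z = H$, the number of $\Gamma_0$-neighbours of $x$ in $Z$ is $\sum_{H \ni x} N(H)$, while the number of $\Gamma$-neighbours of $x$ in $Z$ is $\sum_{H \colon x \in H^{\sigma_i}} N(H) = \sum_{H \ni x} N(H^{\sigma_i^{-1}})$. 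Since $\sigma_i$ preserves parallel classes, $H$ and $H^{\sigma_i^{-1}}$ always lie in the same parallel class, so the equality I want reduces to showing that $N(H)$ depends only on the parallel class of $H$.

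The core computation is to evaluate $N(H)$. Let $\hat H$ be the projective $(d-1)$-space corresponding to $H$. For an isotropic point $z \in \hat H^\perp$ one has $z \in L^\perp$ if and only if $z \in \hat H^\perp \cap L^\perp = \langle \hat H, L \rangle^\perp = P_i^\perp$; hence $N(\hat H)$ is the number of isotropic points of $\hat H^\perp$ that avoid $P_i^\perp$. Both counts come from the quotient geometry. By Lemma \ref{lem:basic_quotient_geom}, $\hat H^\perp / \hat H \cong \bbP_1$, which has $q^e+1$ points; combined with Lemma \ref{lem:gens_on_comax}, the isotropic points of $\hat H^\perp$ are the $\gaussm{d-1}$ points of $\hat H$ together with the $q^{d-1}$ points outside $\hat H$ in each of the $q^e+1$ generators through $\hat H$ (distinct such generators meeting only in $\hat H$ by Lemma \ref{lem:gen_max_dimension}). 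Likewise $P_i^\perp / P_i \cong \bbP_0$ has no isotropic points, so the isotropic points of $P_i^\perp$ are exactly the $\gaussm{d}$ points of $P_i$. Subtracting gives $N(\hat H) = \gaussm{d-1} + (q^e+1)q^{d-1} - \gaussm{d} = q^{d-1+e}$, which is independent of $H$. Consequently $x$ has $q^{d-1+e} \cdot \gaussm{d-1}$ neighbours in $Z$ in both $\Gamma$ and $\Gamma_0$, and hence degree $k_0$.

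I expect the last step to be the main obstacle: getting the isotropic-point count in $\hat H^\perp$ exactly right. The care needed is to separate the points inside and outside $\hat H$, to use that each isotropic point outside $\hat H$ spans a unique generator together with $\hat H$, and to identify $\hat H^\perp \cap L^\perp$ with $P_i^\perp$. A pleasant feature is that $N$ turns out to be globally constant rather than merely constant on parallel classes, so that for this degree statement one in fact only needs $\sigma_i$ to be a bijection; the parallel-class hypothesis will be what matters for the later $\lambda$- and $\mu$-counts.
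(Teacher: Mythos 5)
Your proof is correct and follows essentially the same route as the paper: both arguments count $q^{d-1+e}$ points of $Z$ perpendicular to each of the $\gaussm{d-1}$ hyperplanes $H$ of $P_i$ with $x \in H^{\sigma_i}$, and then add the $q\gaussm{d-1}$ neighbours of $x$ inside $P_i$. The only cosmetic difference is that you organise the count as fibres of $z \mapsto z^\perp \cap P_i$ (which makes the disjointness automatic and lets you observe that only bijectivity of $\sigma_i$ is needed here), whereas the paper invokes Lemma \ref{lem:two_hyps_in_Pi} to see that the sets $H^\perp \setminus P_i$ are pairwise disjoint.
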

\begin{proof}
  Let $\scrH$ denote the set of hyperplanes of $P_i$ with $x \in H^{\sigma_i}$
  for all $H \in \scrH$.
  Notice that $|\scrH|$ is the number of hyperplanes on $x$, so $|\scrH| = \gaussm{d-1}$.
  The vertex $x$ is $\Gamma$-adjacent to the vertices of $Z$ which are in 
  $\bigcup_{H \in \scrH} (H^\perp \setminus P_i)$. By Lemma \ref{lem:gens_on_comax}, 
  we have $q^e \cdot q^{d-1}$ points in $H^\perp \setminus P_i$. By Lemma \ref{lem:two_hyps_in_Pi},
  $x$ is $\Gamma$-adjacent to
  \begin{align*}
    |\scrH| q^{d-1+e} = \gaussm{d-1} q^{d-1+e} = k_0 - q\gaussm{d-1}.
  \end{align*}
  vertices of $Z$. In $L^\perp$ the vertex $x$ is $\Gamma$-adjacent to the $\gaussm{d}-1 = q\gaussm{d-1}$ vertices of $P_i$ except $x$ itself.
  The assertion follows.
\end{proof}

The following result implies constant parameters $\lambda_0$ and $\mu_0$.

\begin{lemma}\label{lem:lambda_mu_easy_cases}
  For $x, y \in L \cup Z \cup \tilde{P}_j$ the number of vertices in $\tilde{P}_i$, $i \neq j$
  that are $\Gamma$-adjacent to $x$ and $y$ equals the number of
  vertices in $\tilde{P}_i$ that are $\Gamma_0$-adjacent to $x$ and $y$.
\end{lemma}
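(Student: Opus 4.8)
The plan is to split into cases according to where $x$ and $y$ lie in the partition $L \cup Z \cup \tilde{P}_j$, exploiting the fact that the only adjacencies altered by the switching are those between some $\tilde{P}_k$ and $Z$ (cases i and ii of Construction \ref{constr:main}); every other adjacency is governed by the unchanged rule iii. Since we only count common neighbours inside $\tilde{P}_i$, the relevant datum is, for each candidate $w \in \tilde{P}_i$, whether $w$ is $\Gamma$- (resp. $\Gamma_0$-) adjacent to $x$ and to $y$. So for each of $x$ and $y$ I would first record which adjacency rule governs its edges to $\tilde{P}_i$, and then compare the two counts.

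First I would dispose of the trivial cases. If either $x$ or $y$ lies in $\tilde{P}_j$, then by Lemma \ref{lem:cliques} (whose proof shows the relevant adjacencies are absent in both $\Gamma$ and $\Gamma_0$) no vertex of $\tilde{P}_i$ is adjacent to it, so both counts are $0$. This settles every pair with a component in $\tilde{P}_j$. If $x \in L$, then since $x$ lies in the totally isotropic generator $P_i$ we have $\tilde{P}_i \subseteq P_i \subseteq x^\perp$; hence every $w \in \tilde{P}_i$ is adjacent to $x$ under rule iii, in both $\Gamma$ and $\Gamma_0$, so the edges from $x$ impose no constraint. In particular the case $x, y \in L$ gives the full count $|\tilde{P}_i| = q^{d-1}$ in both graphs.

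The substantive cases are therefore $x \in L, y \in Z$ and $x, y \in Z$. Write $H_x = x^\perp \cap \tilde{P}_i$ and $H_y = y^\perp \cap \tilde{P}_i$; by Lemma \ref{lem:basic_perp_property} (and $L \nsubseteq z^\perp$ for $z \in Z$) these are affine hyperplanes of $\tilde{P}_i$, and their $\sigma_i$-images lie in the same respective parallel classes. When $x \in L, y \in Z$, the reasoning of the previous paragraph makes the $\Gamma$-count equal to $|H_y^{\sigma_i}|$ and the $\Gamma_0$-count equal to $|H_y|$; these agree because $\sigma_i$ permutes hyperplanes and every affine hyperplane of $\tilde{P}_i$ has $q^{d-2}$ points. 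When $x, y \in Z$, the $\Gamma$-count is $|H_x^{\sigma_i} \cap H_y^{\sigma_i}|$ and the $\Gamma_0$-count is $|H_x \cap H_y|$. The size of an intersection of two affine hyperplanes depends only on whether they coincide, are parallel and distinct, or lie in distinct parallel classes, yielding $q^{d-2}$, $0$, or $q^{d-3}$ respectively; since $\sigma_i$ preserves parallel classes it preserves this trichotomy, so the two intersection sizes coincide.

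The main obstacle is precisely this last case: one must rule out that $\sigma_i$ could turn a crossing pair of hyperplanes into a parallel pair, or conversely. This is exactly what the parallel-class-preserving hypothesis on $\sigma_i$ buys us, combined with the affine fact that two hyperplanes meet in a $(d-3)$-subspace of fixed size if and only if they come from different parallel classes (and are disjoint otherwise). Everything else is bookkeeping across the cases, and the hypothesis $i \neq j$ guarantees that neither $x$ nor $y$ ever lies in $\tilde{P}_i$ itself, so no intra-$\tilde{P}_i$ adjacency needs to be considered.
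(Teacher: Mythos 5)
Your proposal is correct and follows essentially the same route as the paper: vertices of $L$ see all of $\tilde{P}_i$, vertices of $\tilde{P}_j$ see none of it (Lemma \ref{lem:cliques}), vertices of $Z$ see an affine hyperplane, and the parallel-class-preservation of $\sigma_i$ guarantees the counts are unchanged. You merely spell out the final step (the equal/parallel/crossing trichotomy of hyperplane pairs being preserved) that the paper compresses into one sentence.
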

\begin{proof}
  Vertices of $L$ are $\Gamma$- and $\Gamma_0$-adjacent to all vertices of $\tilde{P}_i$ by definition,
  vertices of $\tilde{P}_j$ are $\Gamma$- and $\Gamma_0$-adjacent to no vertices of $\tilde{P}_i$ by Lemma \ref{lem:cliques},
  and vertices of $Z$ are $\Gamma$- and $\Gamma_0$-adjacent to all vertices of an (affine) hyperplane of $\tilde{P}_i$.
  As ${\sigma_i}$ preserves parallel classes of hyperplanes, the assertion follows.
\end{proof}

\begin{lemma}\label{lem:lambda_L_vs_Pi}
  Let $x \in L$ and $y \in \tilde{P}_i$. Then $x$ and $y$ have $\lambda_0$ common neighbors in $\Gamma$.
\end{lemma}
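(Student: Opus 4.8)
The plan is to compare common-neighbour counts in $\Gamma$ and $\Gamma_0$ piece by piece and reduce everything to a single count inside $Z$. First note that $x\in L\subseteq P_i$ and $y\in P_i$ both lie in the common isotropic generator $P_i$, so $x$ and $y$ are adjacent in both $\Gamma$ and $\Gamma_0$; since $\Gamma_0$ is strongly regular with the stated parameters, $x$ and $y$ have exactly $\lambda_0$ common neighbours in $\Gamma_0$. It therefore suffices to show that $x$ and $y$ have the same number of common neighbours in $\Gamma$ as in $\Gamma_0$. I split the candidates according to the partition of $X$. A common neighbour in $L$ or in $\tilde P_i$ only involves adjacencies to $L$ and adjacencies internal to $P_i$, and none of these is altered by the switching, so they contribute equally to both graphs; the common neighbours lying in $\tilde P_k$ with $k\neq i$ contribute equally by Lemma~\ref{lem:lambda_mu_easy_cases} (applied with $j=i$, since $x\in L$ and $y\in\tilde P_i$). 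Thus the whole problem reduces to showing that $x$ and $y$ have the same number of common neighbours in $Z$ under $\Gamma$- and under $\Gamma_0$-adjacency.

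To handle $Z$ I would fibre it over the hyperplanes of $P_i$. For $z\in Z$ we have $z\notin L^\perp\supseteq P_i$, so $z\notin P_i=P_i^\perp$ (the equality $P_i^\perp=P_i$ follows from Lemma~\ref{lem:two_hyps_in_Pi} applied to any two distinct hyperplanes spanning $P_i$), hence $P_i\not\subseteq z^\perp$ and $H_z:=z^\perp\cap P_i$ is a hyperplane of $P_i$ by Lemma~\ref{lem:basic_perp_property}; moreover $H_z\neq L$ because $z\notin L^\perp$. For a hyperplane $H\neq L$ of $P_i$ put $Z_H:=\{z\in Z:H_z=H\}$. The key counting fact is that $|Z_H|=q^{d-1+e}$ is independent of $H$: any $z\in H^\perp\setminus P_i$ satisfies $z^\perp\cap P_i=H$, and by Lemma~\ref{lem:two_hyps_in_Pi} we have $H^\perp\cap L^\perp=P_i$, so $H^\perp\setminus P_i$ is disjoint from $L^\perp$ and hence contained entirely in $Z$; as computed in Lemma~\ref{lem:k_eq_k0}, $|H^\perp\setminus P_i|=q^{d-1+e}$.

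Now I rewrite the two $Z$-counts. Since $x\in L\subseteq P_i$, the condition $z\in x^\perp$ is equivalent to $x\in z^\perp\cap P_i=H_z$, which depends only on the fibre; and $z$ is $\Gamma_0$-adjacent to $y$ iff $y\in H_z$, while $z$ is $\Gamma$-adjacent to $y$ iff $y\in H_z^{\sigma_i}$. Hence the number of common neighbours of $x,y$ in $Z$ equals $q^{d-1+e}\cdot\#\{H\neq L:x\in H,\ y\in H\}$ in $\Gamma_0$, and $q^{d-1+e}\cdot\#\{H\neq L:x\in H,\ y\in H^{\sigma_i}\}$ in $\Gamma$. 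Reindexing the latter set by $K=H^{\sigma_i}$, a bijection of the hyperplanes $\neq L$, turns it into $\#\{K\neq L:x\in K^{\sigma_i^{-1}},\ y\in K\}$, and here is the crucial point: because $x\in L$ and $\sigma_i$ preserves each parallel class, a hyperplane and its $\sigma_i$-image meet $L$ in the same $(d-2)$-space, so $x\in K^{\sigma_i^{-1}}$ iff $x\in K$. The two sets then have equal size, the $Z$-contributions agree, and the total number of common neighbours in $\Gamma$ is again $\lambda_0$.

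The main obstacle is precisely this last matching inside $Z$. One has to recognise that $x$'s adjacency constraint is a \emph{direction at infinity} condition that is invariant under $\sigma_i$ (because $x$ lies on the hyperplane $L$ at infinity and $\sigma_i$ fixes parallel classes), whereas $y$'s constraint is carried along cleanly by the reindexing $H\mapsto H^{\sigma_i}$; the constancy $|Z_H|=q^{d-1+e}$, obtained from Lemma~\ref{lem:two_hyps_in_Pi}, is what lets the reindexing preserve the weighted count. Everything else is bookkeeping against the known value $\lambda_0$ in $\Gamma_0$.
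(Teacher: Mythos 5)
Your proof is correct and rests on the same ingredients as the paper's: fibring $Z$ over the hyperplanes $H \neq L$ of $P_i$ with constant fibre size $q^{d-1+e}$ (via Lemmas \ref{lem:gens_on_comax} and \ref{lem:two_hyps_in_Pi}), and the observation that, since $x \in L$, the condition $x \in H$ is a condition at infinity and hence invariant under the parallel-class-preserving $\sigma_i$. The only packaging difference is that you compare the $\Gamma$-count to the $\Gamma_0$-count and invoke strong regularity of $\Gamma_0$, whereas the paper evaluates the count directly as $\gaussm{d-2}\,q^{d-1+e} + \gaussm{d} - 2 = \lambda_0$.
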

\begin{proof}
  Let $\scrH$ be the set of hyperplanes $H$ of $P_i$ such that $x, y \in H^{\sigma_i}$.
  The number of $(d-2)$-dimensional subspaces through $x$ in $L$ is $\gaussm{d-2}$, so $x$ lies in $\gaussm{d-2}$ 
  parallel classes of hyperplanes of $\tilde{P}_i$. As $\sigma_i$ preserves parallel classes, for each parallel class
  there is exactly one $(d-1)$-dimensional subspace $H$ of $\tilde{P}_i$ such that $y \in H$.
  Hence, $|\scrH| = \gaussm{d-2}$.
  
  Exactly the vertices of $Z$ which are in 
  $\bigcup_{H \in \scrH} (H^\perp \setminus P_i)$ are $\Gamma$-adjacent to $x$ and $y$.
  By Lemma \ref{lem:gens_on_comax}, 
  we have $q^e \cdot q^{d-1}$ points in $H^\perp \setminus P_i$.
  By Lemma \ref{lem:two_hyps_in_Pi},
  \begin{align*}
    |\scrH| q^{d-1+e} = \gaussm{d-2} q^{d-1+e} = \lambda_0 - \gaussm{d} + 2
  \end{align*}
  vertices of $Z$ are $\Gamma$-adjacent to $x$ and $y$.
  As there are $\gaussm{d} - 2$ vertices in $P_i$ without $x$ and $y$, the assertion follows.
\end{proof}

Notice that most of our counting arguments go as the proofs of Lemma \ref{lem:k_eq_k0} and Lemma \ref{lem:lambda_L_vs_Pi}.

\begin{lemma}\label{lem:lambda_proof_Z_vs_P}
  Let $x \in Z$ and $y \in \tilde{P}_i$ such that $x$ and $y$ are $\Gamma$-adjacent.
  Then $x$ and $y$ have $\lambda_0$ common neighbors in $\Gamma$.
\end{lemma}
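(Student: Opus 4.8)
The plan is to classify the common $\Gamma$-neighbours $w$ of $x$ and $y$ according to which of the four parts $L$, $\tilde{P}_i$, $\bigcup_{j\ne i}\tilde{P}_j$ and $Z$ they lie in, and to add up the four contributions. Write $H := x^\perp \cap P_i$, so that $\Gamma$-adjacency of $x$ and $y$ means precisely $y \in H^{\sigma_i}$ (and $H \ne L$ since $x \in Z$). The three parts other than $Z$ are immediate. A point $w \in L$ is $\Gamma$-adjacent to $y$ (both lie in the generator $P_i$, hence $w \in y^\perp$) and to $x$ iff $w \in x^\perp$, so these contribute the $\gaussm{d-2}$ points of the $(d-2)$-space $x^\perp \cap L$. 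A point $w \in \tilde{P}_i$ is always $\Gamma$-adjacent to $y$ and is $\Gamma$-adjacent to $x$ iff $w \in H^{\sigma_i}$; since $y \in H^{\sigma_i}$ this contributes $q^{d-2}-1$ points (the affine hyperplane $H^{\sigma_i}$ minus $y$). By Lemma \ref{lem:cliques} (or Lemma \ref{lem:lambda_mu_easy_cases}) no point of $\tilde{P}_j$, $j \ne i$, is $\Gamma$-adjacent to $y$, so that part contributes nothing.

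The real work is the part in $Z$. For $w \in Z$ put $H_w := w^\perp \cap P_i$; by Lemma \ref{lem:two_hyps_in_Pi} this is a genuine hyperplane of $\tilde{P}_i$ and $w \mapsto H_w$ sorts the points of $Z$ into the disjoint fibres $K^\perp \setminus P_i$. Such a $w$ is $\Gamma$-adjacent to $x$ iff $w \in x^\perp$ and to $y$ iff $y \in H_w^{\sigma_i}$, i.e.\ iff $H_w$ lies in $\mathcal{G} := \{K : y \in K^{\sigma_i}\}$. Since $\sigma_i$ permutes each parallel class and exactly one hyperplane of each class has its $\sigma_i$-image through $y$, we get $|\mathcal{G}| = \gaussm{d-1}$, and $H \in \mathcal{G}$ because $y \in H^{\sigma_i}$. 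Thus the $Z$-contribution is $\sum_{K \in \mathcal{G}} n(K)$, where $n(K) := |\{w \in Z : w \in x^\perp,\ w^\perp \cap P_i = K\}|$ counts the isotropic points of $\langle K, x\rangle^\perp \setminus P_i$; these automatically lie in $Z$, since a point of $K^\perp \cap L^\perp$ would be perpendicular to $\langle K, L\rangle = P_i$ and hence (no isotropic points lie in $P_i^\perp\setminus P_i$) would be forced into $P_i$.

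I expect $n(K)$, and the bookkeeping around it, to be the crux, and it splits into two sub-cases. If $K = H$ then $\langle H, x\rangle$ is an isotropic $d$-space, i.e.\ a generator $G$ with $G \cap P_i = H$; here $\langle H, x\rangle^\perp$ meets the isotropic points exactly in $G$, so $w$ ranges over $G \setminus H$ — but one must remember to exclude $w = x$ itself, giving $n(H) = q^{d-1} - 1$. For $K \ne H$ the space $\langle K, x\rangle$ is non-isotropic; splitting off a hyperbolic plane $\langle h_0, x\rangle$ with $h_0 \in K \setminus x^\perp$ and applying Lemma \ref{lem:basic_quotient_geom} to the corank-one isotropic subspace $K \cap x^\perp$ inside the rank $d-1$ polar space $\langle h_0, x\rangle^\perp$ shows $n(K) = q^{d-2+e}$, uniformly in $K$. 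Summing, the $Z$-part equals $(q^{d-1}-1) + (\gaussm{d-1}-1)q^{d-2+e}$, and using $\gaussm{d-1} = q\gaussm{d-2}+1$ one checks
\begin{align*}
  \gaussm{d-2} + (q^{d-2}-1) + (q^{d-1}-1) + (\gaussm{d-1}-1)q^{d-2+e} = \lambda_0 .
\end{align*}
The single most error-prone point is the $-1$ correction in the $K=H$ case (because $x$ itself lies on the generator $\langle H, x\rangle$), which is exactly what reconciles the count with $\lambda_0$; the uniform evaluation $n(K)=q^{d-2+e}$ for $K \ne H$ via the quotient geometry is the main structural step.
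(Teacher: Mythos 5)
Your proposal is correct and follows essentially the same route as the paper: both identify the set of $\gaussm{d-1}$ hyperplanes of $P_i$ whose $\sigma_i$-image passes through $y$, use Lemma \ref{lem:two_hyps_in_Pi} to make the corresponding perps disjoint outside $P_i$, and obtain $\gaussm{d}-2$ common neighbours from the distinguished hyperplane $H=x^\perp\cap P_i$ together with $q^{d-2+e}$ from each of the remaining $q\gaussm{d-2}$ hyperplanes. The only difference is bookkeeping: the paper packages the $\gaussm{d}-2$ as the common neighbours inside $H^\perp$, whereas you split that same quantity across $L$, $\tilde{P}_i$ and the fibre of $Z$ over $H$ (your $\gaussm{d-2}+(q^{d-2}-1)+(q^{d-1}-1)$), and you supply a somewhat more explicit justification of the count $q^{d-2+e}$ via the quotient geometry.
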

\begin{proof}
  Let $\scrH$ be the set of hyperplanes $H$ of $\tilde{P}$
  such that $y \in H^{\sigma_i}$.
  As $x$ and $y$ are $\Gamma$-adjacent, there exists one $H_0 \in \scrH$
  with $x^\perp \cap P_i = H_0$.
  By the definition of $\Gamma$-adjacency, $x$ and $y$ have $\gaussm{d}-2$
  common neighbors in $H_0^\perp$. 
  
  By Lemma \ref{lem:two_hyps_in_Pi}, 
  the points in $H^\perp \setminus P_i$ are non-$\Gamma$-adjacent to the points 
  of $H'^\perp \setminus P_i$ for all different $H, H' \in \scrH$.
  For $H \in \scrH$ with $H \neq H_0$ all points in $H^\perp$ are $\Gamma$-adjacent to $y$ by
  our definition of $\scrH$, while $x^\perp$ intersects all $q^e+1$ generators through $H$ in
  a hyperplane. Hence, $(x^\perp \cap H^\perp) \setminus P_i$ contains
  \begin{align*}
    q^{d-2+e}
  \end{align*}
  points. As $|\scrH| = \gaussm{d-1}$ and $\gaussm{d-1}-1 = q\gaussm{d-2}$, the number of common neighbors of $x$ and $y$ is
  \begin{align*}
    \gaussm{d}-2 + q\gaussm{d-2} \cdot q^{d-2+e} &= q^2\gaussm{d-2}+q-1 + q^{d-1+e} \gaussm{d-2}\\
    &= q-1 + q^2(q^{d-3+e}+1) \gaussm{d-2} = \lambda_0.
  \end{align*}
\end{proof}

\begin{lemma}\label{lem:lambda_proof_P_vs_P}
  Let $x, y \in \tilde{P}_i$ be $\Gamma$-adjacent.
  Then $x$ and $y$ have $\lambda_0$ common neighbors in $\Gamma$.
\end{lemma}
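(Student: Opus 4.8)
The plan is to mimic the structure of the proof of Lemma~\ref{lem:lambda_L_vs_Pi}, the difference being that both points now lie in the same affine geometry $\tilde{P}_i$ rather than one at infinity and one affine. First I would observe that, because $P_i$ is a generator and hence totally isotropic, $x$ and $y$ are automatically $\Gamma_0$-adjacent (indeed any two distinct points of $P_i$ are), and since neither endpoint lies in $Z$ they are $\Gamma$-adjacent by case~iii of Construction~\ref{constr:main}; so the hypothesis is in fact vacuous. I would then split the common neighbors of $x$ and $y$ according to the partition of $X$ into $L$, the set $L^\perp \setminus L = \bigcup_j \tilde{P}_j$, and $Z$.

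The contributions from $L \cup \tilde{P}_i = P_i$ and from the remaining $\tilde{P}_j$ are immediate. By Lemma~\ref{lem:cliques}, neither $x$ nor $y$ has a $\Gamma$-neighbor in $\tilde{P}_j$ for $j \neq i$, so these contribute nothing. Inside $P_i$, total isotropy makes every other point of $P_i$ a common neighbor of both $x$ and $y$, and these adjacencies are unchanged in $\Gamma$ (again case~iii); this gives $\gaussm{d} - 2$ common neighbors.

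The substantive part is the count in $Z$. Let $\scrH$ be the set of hyperplanes $H$ of $P_i$ with $x, y \in H^{\sigma_i}$. Since $\sigma_i$ permutes the hyperplanes of $\tilde{P}_i$, and the affine hyperplanes containing both $x$ and $y$ correspond exactly to the projective hyperplanes of $P_i$ through the line $\langle x, y \rangle$ (none of which is $L$, as $x, y \notin L$), I obtain $|\scrH| = \gaussm{d-2}$. A point $z \in Z$ is a common neighbor of $x$ and $y$ precisely when $H_z := z^\perp \cap P_i$ lies in $\scrH$, and for fixed $H \in \scrH$ the relevant $z$ are exactly those of $H^\perp \setminus P_i$. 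Here I would note that, since $H \neq L$, a point of $H^\perp$ lies in $L^\perp$ if and only if it lies in $P_i$, so $H^\perp \setminus P_i = H^\perp \cap Z$. As in Lemma~\ref{lem:k_eq_k0} this set has $q^{d-1+e}$ points, and by Lemma~\ref{lem:two_hyps_in_Pi} the sets attached to distinct $H \in \scrH$ are disjoint. Hence $Z$ contributes $\gaussm{d-2}\, q^{d-1+e}$ common neighbors.

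Finally I would add the two counts and invoke the identity $\gaussm{d-2}\, q^{d-1+e} = \lambda_0 - \gaussm{d} + 2$ already established in the proof of Lemma~\ref{lem:lambda_L_vs_Pi}, yielding $(\gaussm{d} - 2) + \gaussm{d-2}\, q^{d-1+e} = \lambda_0$. The only step requiring real care is the $Z$-count: correctly identifying $\scrH$ through the bijection $\sigma_i$ and confirming that the per-hyperplane contribution is genuinely $H^\perp \cap Z$ rather than merely $H^\perp \setminus P_i$, which is exactly where Lemma~\ref{lem:two_hyps_in_Pi} together with $H \neq L$ is needed.
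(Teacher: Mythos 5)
Your proof is correct and follows essentially the same route as the paper: count the $\gaussm{d}-2$ common neighbors inside $P_i$, then count $\gaussm{d-2}\,q^{d-1+e}$ common neighbors in $Z$ via the hyperplanes $H$ with $x,y\in H^{\sigma_i}$, using Lemma~\ref{lem:two_hyps_in_Pi} for disjointness. You are in fact a bit more careful than the paper in spelling out why $|\scrH|=\gaussm{d-2}$ and why $H^\perp\cap Z=H^\perp\setminus P_i$, but the argument is the same.
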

\begin{proof}
  The vertices $\Gamma$-adjacent to $x$ and $y$ in $L^\perp$ are exactly the 
  $\gaussm{d}-2$ vertices in $P_i$ which are not $x$ or $y$. The vertices $\Gamma$-adjacent to $x$ and $y$ in $Z$
  are exactly the vertices perpendicular to the $\gaussm{d-2}$ hyperplanes $H$ with $ x, y \in H^{\sigma_i}$.
  
  By Lemma \ref{lem:two_hyps_in_Pi}, each point $\Gamma$-adjacent to $x$ and $y$ is perpendicular to exactly one $H \in \scrH$.
  We have $H^\perp \setminus P_i = q^{d-1+e}$.
  Hence, the number of vertices $\Gamma$-adjacent to $x$ and $y$ in $Z$ is
  \begin{align*}
    \gaussm{d-2} q^{d-1+e}.
  \end{align*}
  As we have
  \begin{align*}
    \gaussm{d-2} q^{d-1+e} + \gaussm{d} - 2 = \gaussm{d-2} q^{d-1+e} + q^2\gaussm{d-2} + q - 1 = \lambda_0,
  \end{align*}
  the assertion follows.
\end{proof}

\begin{lemma}\label{lem:mu_P_vs_P}
  Let $x \in \tilde{P}_i$ and $y \in \tilde{P}_j$ with $i \neq j$.
  Then $x$ and $y$ have $\mu_0$ common neighbors in $\Gamma$.
\end{lemma}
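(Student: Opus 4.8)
The plan is to partition the common neighbours of $x$ and $y$ according to the three classes of Construction \ref{constr:main} ($L$, the $\tilde P_k$, and $Z$) and to show that they all lie in $L$ or in $Z$. Since $L \subseteq P_i \cap P_j$ and $P_i, P_j$ are isotropic, every $w \in L$ satisfies $w \in x^\perp \cap y^\perp$; as none of $w, x, y$ lies in $Z$, this is precisely $\Gamma$-adjacency, so all $\gaussm{d-1}$ points of $L$ are common neighbours. On the other hand, a common neighbour $w \in \tilde P_k$ would be $\Gamma$-adjacent to $x \in \tilde P_i$, forcing $k = i$ by Lemma \ref{lem:cliques}, and also to $y \in \tilde P_j$, forcing $k = j$; since $i \neq j$ there are none. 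It remains to show that $x$ and $y$ have exactly $q^{d-2+e}\gaussm{d-1}$ common neighbours in $Z$, for then the total is $(q^{d-2+e}+1)\gaussm{d-1} = \mu_0$.

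For the count in $Z$, fix $z \in Z$. By Lemma \ref{lem:basic_perp_property} and $L \nsubseteq z^\perp$, both $z^\perp \cap P_i$ and $z^\perp \cap P_j$ are hyperplanes, and they share the direction at infinity $M := z^\perp \cap L$, a $(d-2)$-space of $L$. The crucial observation is that, because each $\sigma$ preserves parallel classes, $\Gamma$-adjacency of $z$ to $x$ forces $z^\perp \cap P_i = H_M$, where $H_M$ is the unique affine hyperplane of $\tilde P_i$ of direction $M$ with $x \in H_M^{\sigma_i}$; likewise adjacency to $y$ forces $z^\perp \cap P_j = H'_M$, the unique hyperplane of $\tilde P_j$ of direction $M$ with $y \in (H'_M)^{\sigma_j}$. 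Thus the two adjacency conditions are coupled only through the single direction $M$, and for each of the $\gaussm{d-1}$ directions the resulting count will not depend on the chosen $\sigma_i, \sigma_j$. It therefore suffices, for each $M$, to count the $z \in Z$ with $z \in H_M^\perp \cap (H'_M)^\perp$.

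I would evaluate this in the quotient $M^\perp/M \cong \bbP_2$ (Lemma \ref{lem:basic_quotient_geom}), a generalized quadrangle in which each point lies on $q^e + 1$ lines (Lemma \ref{lem:gens_on_comax}). Write $H_M = \langle M, h\rangle$, $H'_M = \langle M, h'\rangle$ with $h \in \tilde P_i$, $h' \in \tilde P_j$. Their images $\bar h, \bar{h'}$ are non-collinear ($h \not\perp h'$ by Lemma \ref{lem:cliques}), while $\bar L := L/M$ is a common neighbour of $\bar h, \bar{h'}$, lying on the lines $P_i/M \ni \bar h$ and $P_j/M \ni \bar{h'}$ whose unique common point is $\bar L$. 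A point $z \in Z$ with the required perpendicularities projects exactly to a common neighbour $\bar z$ of $\bar h, \bar{h'}$ with $\bar z \not\perp \bar L$ (equivalently $z \notin L^\perp$). Two non-collinear points of the quadrangle have $q^e + 1$ common neighbours; the only one collinear with $\bar L$ is $\bar L$ itself, since any other such point would form a triangle with $\bar L$ and $\bar h$ (resp. $\bar{h'}$) and hence lie in $(P_i/M) \cap (P_j/M) = \{\bar L\}$. Hence $q^e$ admissible $\bar z$ remain, each lifting to the $\gaussm{d-1} - \gaussm{d-2} = q^{d-2}$ points of the isotropic $(d-1)$-space $\langle M, z\rangle$ outside $M$, all of which lie in $Z$. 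This gives $q^e \cdot q^{d-2} = q^{d-2+e}$ common neighbours per direction, and $q^{d-2+e}\gaussm{d-1}$ in total.

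The two dispositions in the first paragraph and the lift count are routine; the step I expect to be delicate, and would write out carefully, is the quotient argument — in particular checking that the admissible preimages are exactly the common neighbours of $\bar h$ and $\bar{h'}$ other than $\bar L$, and that the direction-matching genuinely decouples $\sigma_i$ from $\sigma_j$ so that the final count is independent of them (in effect reducing the computation to the collinearity graph $\Gamma_0$).
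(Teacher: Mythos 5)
Your proof is correct, but it takes a genuinely different route from the paper's. You dispose of $L$ (all $\gaussm{d-1}$ points are common neighbours) and of the sets $\tilde P_k$ (none, by Lemma \ref{lem:cliques}) exactly as the paper does, and both arguments arrive at $\gaussm{d-1}q^{d-2+e}$ common neighbours in $Z$; the difference is in how that last count is organised. The paper keeps the roles of $x$ and $y$ asymmetric: it first collects all $\gaussm{d-1}q^{d-1+e}$ $\Gamma$-neighbours of $x$ in $Z$ as $\bigcup_{H\in\scrH}(H^\perp\setminus P_i)$, partitions them into $\gaussm{d-1}q^{d-2+e}$ lines each meeting $\tilde P_i$ in a point (so that Lemma \ref{lem:one_point_per_line_case2} applies), and then invokes Lemma \ref{lem:one_point_per_line_case1} to see that $y$ is $\Gamma$-adjacent to exactly one point of each such line, necessarily off $\tilde P_i$. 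You instead fibre the common neighbours $z\in Z$ over the direction $M=z^\perp\cap L$, observe that the two switching conditions decouple into a single pair of hyperplanes $H_M$, $H'_M$ per direction, and compute each fibre in the quotient generalized quadrangle $M^\perp/M\cong\bbP_2$ using the standard facts that two non-collinear points there have $q^e+1$ common neighbours and that a generalized quadrangle contains no triangles. Your route is symmetric in $x$ and $y$, bypasses Lemmas \ref{lem:one_point_per_line_case1} and \ref{lem:one_point_per_line_case2} entirely, and makes the independence from $\sigma_i,\sigma_j$ explicit (the fibre count depends only on $H_M\cap L=H'_M\cap L=M$); the cost is that it leans on the rank-$2$ quotient and the quadrangle axioms, which the paper's proof never needs. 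The steps you flag as delicate do go through: the preimage of an admissible $\bar z$ is exactly $\langle M,z\rangle\setminus M$, all $q^{d-2}$ of whose points lie in $Z$ and are perpendicular to both $H_M$ and $H'_M$, and the no-triangle argument together with $(P_i/M)\cap(P_j/M)=\{\bar L\}$ shows $\bar L$ is the unique common neighbour of $\bar h,\bar h'$ collinear with $\bar L$.
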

\begin{proof}
  The vertices $\Gamma$-adjacent to $x$ and $y$ in $L^\perp$ are exactly the points in $L$ by Lemma \ref{lem:cliques}, so
  we have $\gaussm{d-1}$ of them.
  Next we count the vertices $\Gamma$-adjacent to $x$ and $y$ in $Z$. 
  Let $\scrH$ be the set of hyperplanes $H$ of $\tilde{P}_i$ with $x \in H^{\sigma_i}$.
  Recall that $x$ lies in $\gaussm{d-1}$ parallel classes of hyperplanes, so $|\scrH| = \gaussm{d-1}$.
  As $\bigcup_{H \in \scrH} (H^\perp \setminus P_i)$ contains 
  \begin{align*}
    \gaussm{d-1} q^{d-1+e}
  \end{align*}
  points, by Lemma \ref{lem:one_point_per_line_case2}, $x$ lies on $\gaussm{d-1} q^{d-2+e}$ lines of $\bbP_d$
  which meet $\tilde{P}_i$ in a point. By Lemma \ref{lem:one_point_per_line_case1},
  $y$ is adjacent to exactly one point on each of these lines. We obtain that
  the number of vertices that are $\Gamma$-adjacent to $x$ and $y$ is
  \begin{align*}
    &\gaussm{d-1} q^{d-2+e} + \gaussm{d-1} = \mu_0.
  \end{align*}
\end{proof}

\begin{lemma}\label{lem:mu_Z_vs_P}
  Let $x \in Z$ and $y \in \tilde{P}_i$ such that $x$ and $y$ are not $\Gamma$-adjacent.
  Then $x$ and $y$ have $\mu_0$ common neighbors in $\Gamma$.
\end{lemma}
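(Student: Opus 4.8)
The plan is to split the common neighbours of $x$ and $y$ according to the partition $X = L \cup \bigcup_j \tilde{P}_j \cup Z$ and to count each part separately. Throughout I would write $H_x := x^\perp \cap P_i$, which by Lemma~\ref{lem:basic_perp_property} together with $x \in Z$ is a hyperplane of $P_i$ different from $L$, hence an affine hyperplane of $\tilde{P}_i$. Let $\scrH$ be the set of hyperplanes $H$ of $\tilde{P}_i$ with $y \in H^{\sigma_i}$; exactly as in Lemma~\ref{lem:mu_P_vs_P} one has $|\scrH| = \gaussm{d-1}$. The crucial reformulation of the hypothesis is that $x$ and $y$ are not $\Gamma$-adjacent precisely when $y \notin H_x^{\sigma_i}$, i.e. when $H_x \notin \scrH$, and I will lean on this several times.

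For the three easy parts: no vertex of $\tilde{P}_j$ with $j \neq i$ is a common neighbour, since it is not $\Gamma$-adjacent to $y$ by Lemma~\ref{lem:cliques}. Every point of $L$ is $\Gamma_0$-adjacent to $y$ (as $\langle L, y\rangle = P_i$ is isotropic) and is $\Gamma_0$-adjacent to $x$ iff it lies in the $(d-2)$-space $x^\perp \cap L$, contributing $\gaussm{d-2}$ common neighbours. Inside $\tilde{P}_i$, every point other than $y$ is $\Gamma_0$-adjacent to $y$ and is $\Gamma$-adjacent to $x$ iff it lies in $H_x^{\sigma_i}$; since $y \notin H_x^{\sigma_i}$, this part contributes exactly the $q^{d-2}$ points of the affine hyperplane $H_x^{\sigma_i}$.

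The heart of the argument is the count inside $Z$. A vertex $w \in Z$ is a common neighbour iff $w \in x^\perp$ (ordinary adjacency between two points of $Z$) and $w^\perp \cap P_i \in \scrH$ (so that $y$ is $\Gamma$-adjacent to $w$); thus these $w$ are exactly the points of $x^\perp$ lying in $\bigcup_{H \in \scrH}(H^\perp \setminus P_i)$, a disjoint union by Lemma~\ref{lem:two_hyps_in_Pi}. I would first record that $H^\perp \setminus P_i \subseteq Z$ for every affine hyperplane $H$: a point of $H^\perp \cap L^\perp$ lies in $\langle H, L\rangle^\perp = P_i^\perp$, whose isotropic points are just those of $P_i$ by Lemma~\ref{lem:basic_quotient_geom}. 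The decisive step is that $H_x \notin \scrH$ forces $x \notin H^\perp$ for every $H \in \scrH$, since $x \in H^\perp$ would give $H \subseteq x^\perp \cap P_i = H_x$, hence $H = H_x$. Consequently $x$ lies in none of the $q^e$ generators $G \neq P_i$ through such an $H$, so by Lemma~\ref{lem:basic_perp_property} the hyperplane $x^\perp \cap G$ meets $G \setminus H$ in $\gaussm{d-1} - \gaussm{d-2} = q^{d-2}$ points; summing over the $q^e$ generators through $H$ gives $q^{d-2+e}$ points of $(x^\perp \cap H^\perp)\setminus P_i$ for each $H$, and over all of $\scrH$ a total of $\gaussm{d-1}\, q^{d-2+e}$ common neighbours in $Z$.

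Adding the four contributions then yields
\begin{align*}
  \gaussm{d-2} + q^{d-2} + \gaussm{d-1}\, q^{d-2+e} = \gaussm{d-1} + \gaussm{d-1}\, q^{d-2+e} = (q^{d-2+e}+1)\gaussm{d-1} = \mu_0,
\end{align*}
using the identity $\gaussm{d-2} + q^{d-2} = \gaussm{d-1}$. I expect the main obstacle to be the uniform count in $Z$: one must verify that the non-adjacency hypothesis is exactly what guarantees $x \notin H^\perp$ for all $H \in \scrH$, and then carry out $|(x^\perp \cap H^\perp)\setminus P_i| = q^{d-2+e}$ generator by generator through $H$. The remaining regions and the final arithmetic are routine and mirror the counting in Lemmas~\ref{lem:mu_P_vs_P} and~\ref{lem:lambda_proof_Z_vs_P}.
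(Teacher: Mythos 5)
Your proof is correct and follows essentially the same route as the paper: both reduce the hypothesis to $x^\perp \cap P_i \notin \scrH$, count $\gaussm{d-1}q^{d-2+e}$ common neighbours in $Z$ by intersecting $x^\perp$ with the $q^e$ other generators through each $H \in \scrH$, and find $\gaussm{d-1}$ common neighbours in $L^\perp$ (the paper treats these as one projective hyperplane of $P_i$, you split them as $\gaussm{d-2}+q^{d-2}$). The extra details you supply (e.g.\ $H^\perp \setminus P_i \subseteq Z$ and the explicit reason $x \notin H^\perp$) are correct elaborations of steps the paper leaves implicit.
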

\begin{proof}
  Let $\scrH$ the hyperplanes $H$ of $P_i$ such that $y \in H^{\sigma_i}$.
  The vertex $y$ lies on $\gaussm{d-1}$ hyperplanes, so $|\scrH| = \gaussm{d-1}$. 
  The vertices adjacent $x$ in $Z$ are $\bigcup_{H \in \scrH} (H^\perp \setminus P_i)$ and there are
  \begin{align*}
    \gaussm{d-1} q^{d-1+e}
  \end{align*}
  of them. 
  As $x$ and $y$ are not $\Gamma$-adjacent,
  no element of $\scrH$ corresponds to $x^\perp \cap P_i$.
  As $x^\perp$ intersects each of the $q^e+1$ generators through $H \in \scrH$ in a $(d-1)$-dimensional subspace
  not equal to $H$,
  we obtain that $\gaussm{d-1} q^{d-2+e}$ points in $Z$ are $\Gamma$-adjacent to $x$ and $y$.
  As $x$ is $\Gamma$-adjacent to the $(d-1)$-space $(x^\perp \cap P_i)^{\sigma_i}$ of $P_i$, there
  are $\gaussm{d-1}$ vertices in $L^\perp$ which are $\Gamma$-adjacent to $x$ and $y$.
  We obtain that $\mu_0$ vertices are $\Gamma$-adjacent to $x$ and $y$.
\end{proof}

\begin{proof}[of Theorem \ref{thm:construction}]
  Lemma \ref{lem:k_eq_k0} shows that each vertex in a $\tilde{P}_i$ has $k_0$ neighbors in $\Gamma$.
  For all other vertices the definition of $\Gamma$ makes it obvious that the degree does not change 
  between $\Gamma$ and $\Gamma_0$.
  
  The following table lists which lemmas show that the codegree of two adjacent vertices is $\lambda_0$.
  Here $i \neq j$.
  \begin{center}
\begin{tabular}{l|llll}
$x \setminus y$ & $L$ & $\tilde{P}_i$ & $\tilde{P}_j$ & $Z$ \\ \hline
$L$ & Lemma \ref{lem:lambda_mu_easy_cases} & Lemma \ref{lem:lambda_L_vs_Pi} &  & Lemma \ref{lem:lambda_mu_easy_cases}\\
$\tilde{P}_i$ & Lemma \ref{lem:lambda_L_vs_Pi} & Lemma \ref{lem:lambda_proof_P_vs_P} & Lemma \ref{lem:cliques} & Lemma \ref{lem:lambda_proof_Z_vs_P}\\
$Z$ & Lemma \ref{lem:lambda_mu_easy_cases} & Lemma \ref{lem:lambda_proof_Z_vs_P} &  & Lemma \ref{lem:lambda_mu_easy_cases}
  \end{tabular}
  \end{center}
  
  The following table lists which lemmas show that the codegree of two non-adjacent vertices is $\mu_0$.
  Here $i \neq j$.
  \begin{center}
\begin{tabular}{l|llll}
$x \setminus y$ & $L$ & $\tilde{P}_i$ & $\tilde{P}_j$ & $Z$ \\ \hline
$L$ &  &  &  & Lemma \ref{lem:lambda_mu_easy_cases}\\
$\tilde{P}_i$ &  &  & Lemma \ref{lem:mu_P_vs_P} & Lemma \ref{lem:mu_Z_vs_P}\\
$Z$ & Lemma \ref{lem:lambda_mu_easy_cases} & Lemma \ref{lem:mu_Z_vs_P} &  & Lemma \ref{lem:lambda_mu_easy_cases}
  \end{tabular}
  \end{center}
Therefore, $\Gamma$ is a strongly regular graph with the same parameters as $\Gamma_0$.
\end{proof}

\section{Non-Isomorphy}\label{sec:noniso}

The construction of Theorem \ref{thm:construction} is very general.
In this section we show for a very special, minimalist case of this construction that the obtained graphs
are non-isomorphic to $\Gamma_0$. This supports the idea that there are plenty 
non-isomorphic strongly regular graphs with the parameters $(v_0, k_0, \lambda_0, \mu_0)$.

\begin{lemma}\label{lem:triples_Gamma0}
  Let $x, y, z$ be three pairwise $\Gamma_0$-adjacent points of $\bbP_d$, $d \geq 3$.
  The number of points $\Gamma_0$-adjacent
  to $x$, $y$ and $z$ is either $\lambda_0-1$ or
  \begin{align*}
    q^2+q-2 + q^3 (q^{d-4+e}+1) \gaussm{d-3}.
  \end{align*}
\end{lemma}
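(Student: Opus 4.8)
The plan is to split into two cases according to the dimension of the span $S := \langle x, y, z \rangle$. Since $x, y, z$ are isotropic points that are pairwise perpendicular, $S$ is a totally isotropic subspace, hence a subspace of $\bbP_d$ (of dimension at most $d$ by Lemma \ref{lem:gen_max_dimension}). Because $x \neq y$, the span $\langle x, y \rangle$ is already a line, so $S$ has dimension $2$ (when $z \in \langle x, y \rangle$) or $3$ (otherwise); these two possibilities will yield the two values in the statement.

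\textbf{First case ($\dim S = 2$).} Here $x, y, z$ lie on a common line $\ell = \langle x, y \rangle$. A point $p$ perpendicular to both $x$ and $y$ is automatically perpendicular to every point of $\langle x, y \rangle = \ell$, and in particular to $z$. Thus the points $\Gamma_0$-adjacent to all of $x, y, z$ are exactly the common neighbours of $x$ and $y$ with $z$ itself removed. Since $z$ is an isotropic point of $\ell^\perp$ distinct from $x$ and $y$, it is one of the $\lambda_0$ common neighbours of the adjacent pair $x, y$, and so the count here is $\lambda_0 - 1$.

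\textbf{Second case ($\dim S = 3$).} A point is $\Gamma_0$-adjacent to each of $x, y, z$ if and only if it is an isotropic point of $S^\perp$ distinct from $x, y, z$, so I would count the isotropic points of $S^\perp$ and subtract $3$. The isotropic points lying inside $S$ are precisely the $\gaussm{3} = q^2 + q + 1$ points of $S$, as $S$ is totally isotropic. For the isotropic points of $S^\perp \setminus S$, I would apply Lemma \ref{lem:basic_quotient_geom} to get $S^\perp / S \cong \bbP_{d-3}$: each point of $\bbP_{d-3}$ lifts to a $4$-dimensional totally isotropic subspace $T$ with $S \subseteq T \subseteq S^\perp$, all of whose points are isotropic, and $T$ contributes $\gaussm{4} - \gaussm{3} = q^3$ points outside $S$. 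Since $\bbP_{d-3}$ has $(q^{d-4+e}+1)\gaussm{d-3}$ points by the formula for $v_0$, there are $q^3(q^{d-4+e}+1)\gaussm{d-3}$ isotropic points in $S^\perp \setminus S$. Adding the $\gaussm{3}$ points of $S$ and subtracting the three points $x, y, z$ gives $q^2 + q - 2 + q^3(q^{d-4+e}+1)\gaussm{d-3}$, as claimed.

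The only delicate point — and the step I would treat most carefully — is the bookkeeping in the second case: one must check that an isotropic $p \in S^\perp \setminus S$ really produces a \emph{totally} isotropic $4$-space $\langle S, p \rangle$ (so that the correspondence with the points of $\bbP_{d-3}$ is a clean bijection and no non-isotropic points are miscounted), and that each such $4$-space contributes exactly $q^3$ points outside $S$. Once this identification of the isotropic points of $S^\perp$ with the points of the quotient polar space $\bbP_{d-3}$ is pinned down, the remaining arithmetic is immediate (and the formula correctly degenerates when $d = 3$, since then $\gaussm{d-3} = \gaussm{0} = 0$).
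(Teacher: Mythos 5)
Your proposal is correct and follows essentially the same route as the paper: split on whether $\langle x,y,z\rangle$ is a line or a totally isotropic $3$-space, handle the collinear case via $\langle x,y\rangle^\perp = \langle x,y,z\rangle^\perp$, and handle the non-collinear case by counting points of the quotient polar space $S^\perp/S \cong \bbP_{d-3}$ (Lemma \ref{lem:basic_quotient_geom}), each contributing $q^3$ points outside $S$. Your extra care in verifying that isotropic points of $S^\perp\setminus S$ correspond bijectively to points of the quotient is a welcome elaboration of a step the paper leaves implicit, but it is not a different argument.
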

\begin{proof}
  The points $x$, $y$ and $z$ are either all collinear or they span a $3$-space of $\bbP_d$.
  
  If $x$, $y$ and $z$ are collinear, then $\langle x, y \rangle = \langle x, y, z \rangle$.
  Hence $\langle x, y \rangle^\perp = \langle x, y, z \rangle^\perp$.
  Hence $x$, $y$ and $z$ are $\Gamma_0$-adjacent to all points $\Gamma_0$-adjacent
  to $x$ and $y$ except $z$. These are $\lambda_0-1$ points.
  
  If $x$, $y$ and $z$ span a $3$-space $\pi$, then the quotient geometry $\pi^\perp/\pi$
  is isomorphic to $\bbP_{d-3}$ by Lemma \ref{lem:basic_quotient_geom}. As $\bbP_{d-3}$ has
  \begin{align*}
    (q^{d-4+e}+1) \gaussm{d-3}
  \end{align*}
  points, $\pi^\perp \setminus \pi$ contains $q^3 (q^{d-4+e}+1) \gaussm{d-3}$ points.
  As $\pi$ has $q^2+q-2$ points without $x$, $y$ and $z$, we obtain the second number.
\end{proof}

For the rest of the section we assume that $d$ is at least $3$.
Let $\pi$ be a $(d-2)$-dimensional subspace of $P_1$ with $\pi \nsubseteq L$.
Let $\ell$ be a line in $\pi^\perp$ that meets $P_1$ trivially (the existence of $\ell$ can
be easily seen in the quotient geometry $\pi^\perp/\pi$). Notice that $\ell$ lies completely in $Z$.
Let $x, y, z \in \ell$ pairwise different points of $\ell$. Define $H_x$ as $x^\perp \cap P_1$,
$H_y$ as $y^\perp \cap P_1$ and $H_z$ as $z^\perp \cap P_1$.
By Lemma \ref{lem:Hxyz_pw_dis}, the hyperplanes $H_x$, $H_y$ and $H_z$ are pairwise different and
  \begin{align*}
    H_x \cap H_y \cap H_z = \pi \nsubseteq L.
  \end{align*}

\begin{lemma}\label{lem:Hxyz_prime_int_empty}
  Let $H_z'$ be a hyperplane in $P_1$ that is parallel to $H_z$ in $\tilde{P}_1$.
  Then $H_x \cap H_y \cap H_z'$ is empty in $\tilde{P}_1$.
\end{lemma}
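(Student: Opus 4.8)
The plan is to reduce the claim to a single dimension count inside the generator $P_1$, combined with the algebraic characterization of parallel hyperplanes in the affine space $\tilde{P}_1 = P_1 \setminus L$. The target is to show $H_x \cap H_y \cap H_z' \subseteq L$, since a subspace of $P_1$ is empty in $\tilde{P}_1$ precisely when it is contained in the hyperplane at infinity $L$.

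First I would pin down $H_x \cap H_y$ exactly. Since $H_x$ and $H_y$ are two distinct $(d-1)$-dimensional subspaces of the $d$-dimensional space $P_1$, their intersection has dimension $d-2$. As $\pi = H_x \cap H_y \cap H_z$ is contained in $H_x \cap H_y$ and already has dimension $d-2$, it follows that $H_x \cap H_y = \pi$. In particular $\pi \subseteq H_z$ (which is in any case immediate from the definition of $\pi$); this containment is the fact I will exploit, because it upgrades the triple intersection $H_x\cap H_y \cap H_z$ into a usable two-way inclusion.

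Next I would convert "parallel" into algebra. By the construction of the affine geometry $P_1 \setminus L$ recalled in the preliminaries, two distinct hyperplanes of $P_1$ lie in the same parallel class of $\tilde{P}_1$ exactly when their intersection is contained in $L$. Hence $H_z'$ being parallel to (and distinct from) $H_z$ gives $H_z \cap H_z' \subseteq L$. Combining the two steps yields the chain
\[
H_x \cap H_y \cap H_z' = \pi \cap H_z' \subseteq H_z \cap H_z' \subseteq L,
\]
where the middle inclusion uses $\pi = H_x \cap H_y \subseteq H_z$. Since every point of $\tilde{P}_1$ lies outside $L$, this shows $H_x \cap H_y \cap H_z'$ contains no point of $\tilde{P}_1$, i.e., it is empty in $\tilde{P}_1$.

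I do not expect a genuine obstacle here: the argument is essentially bookkeeping of subspaces and a one-line dimension count. The two points that merely require care are the collapse $H_x \cap H_y = \pi$ (this is what turns the triple intersection into the containment $H_x \cap H_y \subseteq H_z$ driving the whole chain), and the standard reading that "$H_z'$ parallel to $H_z$" refers to a \emph{distinct} hyperplane in the parallel class of $H_z$, so that $H_z \cap H_z' \subseteq L$ rather than $H_z' = H_z$ (for which the statement would fail).
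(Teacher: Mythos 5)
Your proposal is correct and follows essentially the same route as the paper: establish $H_x \cap H_y \subseteq H_z$ (the paper cites Lemma \ref{lem:Hxyz_pw_dis} for this; your dimension count $H_x\cap H_y=\pi$ is an equivalent way to get it) and then use that parallelism means $H_z\cap H_z'\subseteq L$, so the triple intersection lands inside the hyperplane at infinity. Your closing caveat that $H_z'$ must be read as distinct from $H_z$ is a fair observation, consistent with how $\sigma_1$ is used in the paper.
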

\begin{proof}
  By Lemma \ref{lem:Hxyz_pw_dis}, $H_x \cap H_y \subseteq H_z$.
  As $H_z'$ is parallel to $H_z$, it follows that $H_x \cap H_y \cap H_z'$ is empty in $\tilde{P}_1$.
\end{proof}

We define a graph $\Gamma_1$ according to Theorem \ref{thm:construction} as
$\Gamma(L, \sigma_1, \id_2, \ldots, \id_{q^e+1})$. Here $\sigma_1$ is the mapping
that swaps $H_z$ with $H_z'$ and fixes all other hyperplanes of $\tilde{P}_1$.

\begin{lemma}\label{lem:triples_Gamma}
  The number of vertices that are $\Gamma_1$-adjacent to $x$, $y$ and $z$ is
  \begin{align*}
    \lambda_0 - 1 - q^{d-3}.
  \end{align*}
\end{lemma}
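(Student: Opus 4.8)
The plan is to compute the answer as a correction to the $\Gamma_0$-count. Since $x$, $y$, $z$ all lie on the line $\ell$, they are collinear, so the collinear case of Lemma~\ref{lem:triples_Gamma0} gives that the number of common $\Gamma_0$-neighbors is exactly $\lambda_0 - 1$. I would then argue that passing from $\Gamma_0$ to $\Gamma_1$ changes only one block of this count, and track that change precisely.

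First I would localize where adjacencies can differ. Since $\Gamma_1 = \Gamma(L, \sigma_1, \id, \ldots, \id)$ and $\sigma_i = \id$ for $i \neq 1$, the only adjacencies that differ between $\Gamma_0$ and $\Gamma_1$ are those between $Z$ and $\tilde{P}_1$, and among these only for a vertex $w \in Z$ with $w^\perp \cap P_1 \in \{H_z, H_z'\}$ (the two hyperplanes $\sigma_1$ moves). Because $x, y, z \in Z$ (as $\ell \subseteq Z$), their adjacencies to any vertex in $L$, in $\tilde{P}_i$ with $i \neq 1$, or in $Z$ are all governed by the unchanged ``otherwise''/identity-$\sigma_i$ cases, hence are identical in $\Gamma_0$ and $\Gamma_1$. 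So the common-neighbor counts contributed by $L$, by $\bigcup_{i \neq 1}\tilde{P}_i$, and by $Z$ are unchanged, and the entire discrepancy lives in $\tilde{P}_1$.

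Next I would pin down the adjacency of $x, y, z$ into $\tilde{P}_1$. Using $\ell = \langle x, y\rangle$ together with Lemma~\ref{lem:Hxyz_pw_dis}, one obtains $H_x \cap H_z = H_y \cap H_z = \ell^\perp \cap P_1 = \pi \nsubseteq L$, so neither $H_x$ nor $H_y$ is parallel to $H_z$; hence $H_x, H_y \notin \{H_z, H_z'\}$ and $\sigma_1$ fixes both. Thus the adjacencies of $x$ and $y$ into $\tilde{P}_1$ are unchanged, while $z$ (with $z^\perp \cap P_1 = H_z$) now sees $H_z' \cap \tilde{P}_1$ in place of $H_z \cap \tilde{P}_1$. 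Consequently the common neighbors of $x, y, z$ in $\tilde{P}_1$ are $(H_x \cap H_y \cap H_z) \cap \tilde{P}_1 = \pi \setminus L$ in $\Gamma_0$, but $(H_x \cap H_y \cap H_z') \cap \tilde{P}_1$ in $\Gamma_1$, which is empty by Lemma~\ref{lem:Hxyz_prime_int_empty}.

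Finally, a short count gives $|\pi \setminus L| = \gaussm{d-2} - \gaussm{d-3} = q^{d-3}$, where $\pi \cap L$ is a $(d-3)$-space since $\pi \nsubseteq L$. Therefore the total is $(\lambda_0 - 1) - q^{d-3} + 0 = \lambda_0 - 1 - q^{d-3}$, as claimed. The main obstacle is the bookkeeping in the localization step, and in particular the verification that $\sigma_1$ fixes $H_x$ and $H_y$ (equivalently $H_x, H_y \notin \{H_z, H_z'\}$); once that is settled the rest is a direct substitution into the collinear case of Lemma~\ref{lem:triples_Gamma0}.
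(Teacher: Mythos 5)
Your proposal is correct and follows essentially the same route as the paper: start from the collinear case of Lemma \ref{lem:triples_Gamma0} to get $\lambda_0-1$ in $\Gamma_0$, localize the change to the adjacencies between $\{x,y,z\}$ and $\tilde{P}_1$, and use Lemma \ref{lem:Hxyz_prime_int_empty} to see that the $q^{d-3}$ common neighbors in $\pi\setminus L$ are lost and none are gained. Your explicit verification that $\sigma_1$ fixes $H_x$ and $H_y$ (since $H_x\cap H_z=H_y\cap H_z=\pi\nsubseteq L$ rules out $H_x,H_y\in\{H_z,H_z'\}$) and the count $|\pi\setminus L|=\gaussm{d-2}-\gaussm{d-3}=q^{d-3}$ are details the paper leaves implicit.
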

\begin{proof}
  As in the proof of Lemma \ref{lem:triples_Gamma0}, exactly $\lambda_0 - 1$
  vertices are $\Gamma_0$-adjacent to $x$, $y$ and $z$. The only difference between $\Gamma_0$ and $\Gamma_1$ occurs for the adjacencies
  between $\{x, y, z\}$ and $\tilde{P}_1$. By Lemma \ref{lem:Hxyz_prime_int_empty}, $\dim(H_x \cap H_y \cap H_z) = d-2$ and
  $H_x^{\sigma_1} \cap H_y^{\sigma_1} \cap H_z^{\sigma_1} = H_x \cap H_y \cap H_z'$ is empty. Hence the number of vertices
  in $\tilde{P}_1$ that are $\Gamma_1$-adjacent to $\{ x, y, z\}$ is $q^{d-3}$ less than the number
  of vertices in $\tilde{P}_1$ that are $\Gamma_0$-adjacent to $\{ x, y, z\}$.
\end{proof}

\begin{cor}
  The graphs $\Gamma_0$ and $\Gamma_1$ are non-isomorphic for $d \geq 3$.
\end{cor}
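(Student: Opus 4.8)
The plan is to separate $\Gamma_0$ from $\Gamma_1$ by an isomorphism invariant. Since both graphs are strongly regular with identical parameters by Theorem \ref{thm:construction}, the parameters themselves are useless, so instead I would use the set of \emph{triangle codegrees}: for a graph record, over all triples of pairwise adjacent vertices, the number of common neighbors. Any graph isomorphism sends triangles to triangles and preserves common neighborhoods, so the set of values attained is an isomorphism invariant. It therefore suffices to find a codegree value realized by a triangle of one graph but by no triangle of the other.

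First I would read off the admissible values for $\Gamma_0$. By Lemma \ref{lem:triples_Gamma0}, every triangle of $\Gamma_0$ has codegree either $\lambda_0-1$ (the collinear case) or
\[
  V := q^2+q-2 + q^3 (q^{d-4+e}+1) \gaussm{d-3}
\]
(the $3$-space case), so the invariant set of $\Gamma_0$ is contained in $\{\lambda_0-1,\,V\}$. On the other side, Lemma \ref{lem:triples_Gamma} supplies an explicit triple $\{x,y,z\}$ in $\Gamma_1$: the three points lie on the line $\ell \subseteq Z$, and adjacencies inside $Z$ are unchanged between $\Gamma_0$ and $\Gamma_1$, so $x,y,z$ are pairwise $\Gamma_1$-adjacent and form a genuine triangle, whose codegree is $\lambda_0-1-q^{d-3}$. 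Hence $\lambda_0-1-q^{d-3}$ lies in the invariant set of $\Gamma_1$.

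It then remains to check that $\lambda_0-1-q^{d-3}$ is neither of the two admissible $\Gamma_0$-values. That it differs from $\lambda_0-1$ is immediate, since $q^{d-3}\geq 1$ for $d\geq 3$. For the comparison with $V$, I would substitute $\gaussm{d-2}=q\gaussm{d-3}+1$ into $\lambda_0 = q-1+q^2(q^{d-3+e}+1)\gaussm{d-2}$ and simplify using $(q-1)\gaussm{d-3}=q^{d-3}-1$; this yields the clean identity
\begin{align*}
  \lambda_0 - 1 - V = q^{2d-4+e}.
\end{align*}
Consequently $\lambda_0-1-q^{d-3}=V$ would force $q^{d-3}=q^{2d-4+e}$, that is $d = 1-e$; but $e \geq 0$ gives $1-e \leq 1 < 3 \leq d$, a contradiction. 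Therefore $\lambda_0-1-q^{d-3}\notin\{\lambda_0-1,\,V\}$, the invariant sets of $\Gamma_0$ and $\Gamma_1$ differ, and $\Gamma_0 \not\cong \Gamma_1$.

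The only genuinely delicate step is the arithmetic identity $\lambda_0 - 1 - V = q^{2d-4+e}$ together with the exclusion of the coincidence $q^{d-3}=q^{2d-4+e}$; everything else is structural bookkeeping built directly on the two preceding lemmas. I expect this computation to be the main obstacle, since one must expand the Gaussian binomials $\gaussm{d-2}$ and $\gaussm{d-3}$ carefully, but it becomes routine once they are related by $\gaussm{d-2}=q\gaussm{d-3}+1$, and the exponent bound $1-e<d$ then finishes the argument for every one of the six polar space types.
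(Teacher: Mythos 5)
Your proposal is correct and follows essentially the same route as the paper: both use the set of triangle codegrees as the invariant, cite Lemma \ref{lem:triples_Gamma0} for the two admissible values in $\Gamma_0$ and Lemma \ref{lem:triples_Gamma} for the value $\lambda_0-1-q^{d-3}$ realized in $\Gamma_1$, and conclude by showing this value is neither $\lambda_0-1$ nor $q^2+q-2+q^3(q^{d-4+e}+1)\gaussm{d-3}$. The only difference is that you explicitly carry out the comparison the paper dismisses as ``easy to verify,'' and your identity $\lambda_0-1-V=q^{2d-4+e}$ together with the exponent argument $d=1-e<3$ checks out.
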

\begin{proof}
  In $\Gamma_0$ three pairwise adjacent vertices have either $\lambda_0-1$ or $q^2+q-2 + q^3 (q^{d-4+e}+1) \gaussm{d-3}$
  common neighbors. In $\Gamma$ we have, by Lemma \ref{lem:triples_Gamma}, three pairwise adjacent
  vertices that have $\lambda_0 - 1 - q^{d-3}$ common neighbors.
  It is easy to verify that $\lambda_0 - 1 - q^{d-3} \neq q^2+q-2 + q^3 (q^{d-4+e}+1) \gaussm{d-3}$ for $d \geq 3$.
\end{proof}

\begin{rem}
 This shows that the graphs constructed here are non-isomprhpic to the graphs
 constructed by Kantor \cite{Kantor1982a} as in Kantor's construction three 
 vertices have either $\lambda_0-1$ or $q^2+q-2 + q^3 (q^{d-4+e}+1) \gaussm{d-3}$ 
 common neighbours.
\end{rem}

\section{Open Problems}\label{sec:openproblems}

Several questions arose during this work. Here are some of them.

Similar arguments as in Section \ref{sec:noniso} for pairwise collinear $k$-tuples instead of $3$-tuples
can show that we obtain at least $d-1$ pairwise non-isomorphic graphs.
We have $(q!)^{\gaussm{d-1}}$ choices for each $\sigma_i$ in $(\sigma_1, \ldots, \sigma_{q^e+1})$.
Clearly, these will not be all different graphs, but many will be, so in the author's opinion
any reasonable lower bound on the number of non-isomorphic graphs with the same parameters as $\Gamma_0$
is something like $q^{q^{d-2}}$.

\begin{question}
  Do we have at least $q^{q^{d-2}}$ pairwise non-isomorphic graphs with the same parameters 
  as $\bbP_d$? Can we do better?
\end{question}

A technique similar to \cite[Section 3]{Jungnickel2010} might be helpful for showing this.
Alternatively, one could try to count the common neighbors of triples in more detail.

There exists a strongly regular graph that is a collinearity graph of the building of type $E_6$ \cite[Section 10.8]{Brouwer1989}.
As our basic idea is to mess with a subgeometry of $\bbP_d$, it is tempting
to find a similar substructure in $E_6$.

\begin{question}
  Can one use a similar method to show that there are many strongly regular graphs
  with the same parameters as the collinearity graph of $E_6$?
\end{question}

There are other strongly regular graphs which come from similar geometric constructions. For these one can ask similar questions.
A natural generalization of strongly regular graphs are distance-regular graphs \cite{Dam2006}. 
Many of the known infinite families come from geometries such as the dual polar graph \cite[Section 9.4]{Brouwer1989}.
Van Dam and Koolen constructed new distance-regular graphs, the twisted Grassmann graphs, by applying a switching operation
to the Grassmann graphs \cite{Dam2005,Munemasa2015}.

\begin{question}
  Can one use similar techniques to show that there are many distance-regular graphs with
  the same parameters as (some) dual polar graphs?
\end{question}

The used technique generalizes Godsil-McKay switching for a very specific case.
This switching is a far more general method described in terms of adjacency matrices.

\begin{question}
  Can one formulate a generalization of the Godsil-McKay switching which covers the technique used here?
\end{question}

\paragraph*{Acknowledgment} I would like to thank Aida Abiad for getting me interested in the topic and answering many of my questions about it.
The idea for the described construction was triggered by the switching sets defined by Susan Barwick, 
Wen-Ai Jackson and Tim Penttila. I would like to thank them for this and for providing me with a preprint of their work.
Bill Kantor kindly pointed out that my construction is non-isomorphic to his (when it exists).

%

\end{document}